\newtheorem{thm}{Theorem} \newtheorem{lemma}{Lemma}
\newtheorem{coro}{Corollary} \setlength{\parindent}{0cm} \let\paragraph\subsection
\def\Binomial#1#2{{#1\choose #2}}
\title{Manifolds from Partitions}
\author{Oliver Knill}
\date{1/24, 2024, updated 2/1/2024}
\address{Department of Mathematics \\ Harvard University \\ Cambridge, MA, 02138 }
\subjclass{}
\keywords{Manifolds, Partitions}
\begin{document}
\maketitle

\begin{abstract}
If $f$ maps a discrete $d$-manifold $G$ onto 
a $(k+1)$-partite complex $P$ then $H(G,f,P)$, the set of simplices $x$ 
in $G$ such that $f(x)$ contains at least one facet in $P$ 
is either a $(d-k)$-manifold or empty.
\end{abstract} 

\section{The theorem}

\paragraph{}
A {\bf $d$-manifold} is a finite abstract simplicial complex $G$ for which all unit
spheres $S(x)$ are $(d-1)$-spheres. The {\bf join} $A \oplus B$ of simplicial complexes is
the disjoint union $A \cup B \cup \{ x \cup y, x \in A, y \in B \}$. Spheres form a sub-monoid
of all complexes as $S_{A \oplus B}=S_A(x) \oplus B$ or $A \oplus S_B(x)$. 
An {\b f integer partition} $p=(n_0,\dots, n_k)$ of $n=n_0+\cdots n_k$ defines 
the {\bf partition complex} $P=K_{\{(n_0,\dots,n_k)\}}$. It is the join of finitely many 
$0$-dimensional complexes $K_{(n_j)}=\overline{K_{n_j}}$ and a {\bf $(k+1)$-partite graph} 
of maximal dimension $k$. Partition complexes form a submonoid of all complexes. 
A map $f$ from $V(G)$ to $V(P)$ is {\bf P-onto} if its image is $k$ dimensional.
Given a $d$-manifold $G$ and a $P$-onto map $f:G \to P$, define
$H(G,f,P)$ as the pull back of $P$ under $f$: the set $U$ of all simplices $y \in G$ 
such that $f(y)$ contains one of the facets in $P$
is an open set \cite{Alexandroff1937,FiniteTopology} and defines the
vertex set of a graph, where two $a,b \in U$ are connected if one is contained in the other. 
$H(G,f,P)$ is the simplicial complex defined by the complete sub-graphs.
We always consider maps $f:G \to P$ defined  by point maps $f: V(G) \to V(P)$. They
are continuous with respect to the Alexandrov topologies: the inverse of a sub-simplicial complex
in $P$ is a sub-simplicial complex in $G$. A $(k+1)$ partition $p$ on $[1,n]=\{1,,\dots,n\}$ 
defines so a $k$-dimensional topology on $[1,n]$ and so an irreducible representation 
of the symmetry group of $[1,n]$. 

\begin{thm}
\label{1}
If $f$ maps a $d$-manifold $G$ P-onto a $k$-complex $P \in \mathcal{P}$,
then $H(G,f,P)$ is a $(d-k)$-manifold or empty.
\end{thm}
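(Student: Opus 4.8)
The plan is to verify the defining property directly: to show that every unit sphere of $H=H(G,f,P)$ is a $(d-k-1)$-sphere. Write $U$ for the set of simplices $y\in G$ whose image $f(y)$ meets all $k+1$ colour classes of $P$, partially ordered by inclusion; by construction $H$ is the order complex of $U$, equivalently the Whitney complex of its comparability graph, and I may assume $U\neq\emptyset$, since otherwise $H$ is empty and there is nothing to prove. A minimal element of $U$ must hit each colour exactly once, hence is a rainbow $k$-simplex of $G$, while the maximal elements are the facets of $G$; so the facets of $H$ are the maximal chains, each running from a $k$-simplex up to a $d$-simplex and therefore of dimension $d-k$, which is the expected dimension.

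Now fix a vertex $y$ of $H$, that is, a simplex $y\in U$ of dimension $j$. The elements of $U$ comparable to $y$ split into a lower part $D_y=\{z\in U: z\subsetneq y\}$ and an upper part $U_{>y}=\{w\in U: w\supsetneq y\}$, and since $z\subsetneq y\subsetneq w$ forces $z\subsetneq w$, every lower element is comparable to every upper element. Writing $\Delta(\cdot)$ for the order complex, the unit sphere therefore decomposes as a join
\[
S(y)=\Delta(D_y)\oplus\Delta(U_{>y}).
\]
I would treat the two factors separately. The upper factor is routine: any coface of $y$ already meets every colour, so $U_{>y}$ is the full poset of simplices of $G$ strictly containing $y$, whose order complex is the barycentric refinement of the link $S_G(y)$. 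Since $G$ is a $d$-manifold, $S_G(y)$ is a $(d-j-1)$-sphere, and barycentric refinement preserves this, so $\Delta(U_{>y})$ is a $(d-j-1)$-sphere.

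The heart of the argument --- and the step I expect to be the main obstacle --- is the lower factor $\Delta(D_y)$. If $y$ carries colour multiplicities $(m_0,\dots,m_k)$ with each $m_i\ge 1$ and $\sum_i m_i=j+1$, then the subsets of the vertex set of $y$ meeting every colour form exactly the face poset of the product of simplices $\Pi=\sigma_0\times\cdots\times\sigma_k$, where $\sigma_i$ is the simplex spanned by the $i$-th colour class. Its order complex is the barycentric subdivision of $\Pi$, an $r$-ball with $r=\sum_i(m_i-1)=j-k$. Deleting the top element, the whole vertex set of $y$, removes the interior barycenter of $\Pi$, and the order complex of what remains is its link, the barycentric subdivision of the boundary $\partial\Pi$; hence $\Delta(D_y)$ is an $(r-1)=(j-k-1)$-sphere. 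The care required here is to confirm that this boundary is a sphere in the inductive discrete sense used for manifolds, not merely up to homotopy. I would secure this either from the fact that the boundary of a convex polytope and its refinements are discrete spheres, or, staying inside the join framework of the paper, by induction on $k$: peeling off one factor via $P=P'\oplus K_{(n_k)}$ exhibits $\Delta(D_y)$ as a join $\partial\sigma_0\oplus\cdots\oplus\partial\sigma_k$ of boundary spheres, and joins of spheres are spheres.

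Combining the two factors by the join formula gives
\[
S(y)=\Delta(D_y)\oplus\Delta(U_{>y})=S^{\,j-k-1}\oplus S^{\,d-j-1}=S^{\,d-k-1},
\]
independently of $y$ and of its colour multiplicities. Thus every unit sphere of $H$ is a $(d-k-1)$-sphere, and $H$ is a $(d-k)$-manifold whenever it is nonempty.
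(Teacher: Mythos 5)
Your proposal is correct in substance, and its skeleton is the same as the paper's: fix a vertex $y$ of $H$, split its unit sphere by the hyperbolic join decomposition into the part below $y$ and the part above $y$, and note that the upper part is all of $S_G^+(y)$ (the barycentric refinement of the link of $y$ in $G$, a $(d-j-1)$-sphere) because every coface of a rainbow simplex is rainbow. Where you genuinely diverge is the lower part, which is the heart of the theorem. The paper handles $S_H^-(y)$ by induction on $d-k$: it is the pull-back $H(S_G^-(y),f,P)$ inside the boundary sphere of the simplex $y$, with the induction anchored by the case of the boundary of a $(k+1)$-simplex, where $\{f=P\}$ is a two-point complex. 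You instead identify $D_y$ explicitly as the poset of proper faces of the polytope $\Pi=\sigma_0\times\cdots\times\sigma_k$, so that $\Delta(D_y)$ is the barycentric subdivision of $\partial\Pi$, a $(j-k-1)$-sphere. This buys a cleaner, induction-free argument that makes the codimension-$k$ drop transparent; the cost is importing the fact that such polytopal boundaries (equivalently, refinements of joins of boundary spheres) are spheres in the recursive discrete sense. The paper's route stays entirely inside the discrete category, but note that as written it silently strengthens its inductive hypothesis from ``manifold'' to ``sphere'' when it asserts that $S_H^-(x)$ is a sphere; your explicit identification is precisely what closes that gap.

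One concrete correction to your last step: the peeling-off claim is not literally true. $\Delta(D_y)$ is not the join $\partial\sigma_0\oplus\cdots\oplus\partial\sigma_k$ but its barycentric subdivision: complementation $\tau_i\mapsto\sigma_i\setminus\tau_i$ within each colour class is an order-reversing bijection between the proper nonempty faces of $\Pi$ and the nonempty faces of the join, and order complexes do not distinguish a poset from its opposite. For $k=1$ and $m_0=m_1=2$, for instance, $\Delta(D_y)$ is an $8$-cycle while $\partial\sigma_0\oplus\partial\sigma_1$ is a $4$-cycle. Since you already invoke invariance of sphere-ness under barycentric refinement for the upper factor, the same fact repairs this step, and the conclusion $S_H(y)=S^{j-k-1}\oplus S^{d-j-1}=S^{d-k-1}$ stands.
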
 
\begin{proof}
Write $H=H(G,f,P)$ and use induction with respect to $d-k$. If $d-k=0$, then 
by assumption, we get a finite set of $d$-simplices in $G$ which each are mapped 
into a maximal simplex of $P$. Since all these elements of $G$ are maximal, 
they are disconnected and form a $0$-dimensional complex, a 0-manifold. 
Let $x \in G$ be such that $f(x)$ contains a maximal simplex in $P$. 
We have to show that $S_H(x)$ is a sphere. Use the
hyperbolic structure $S_G(x) =S_G^+(x) \oplus S_G^-(x)$, where $S_G^+(x)$ 
is the set of simplices strictly containing $x$ and $S_G^-$ the set of 
simplices strictly contained in $x$. In the case of a
manifold $G$, both $S_G^+(x)$ and $S_G^-(x)$ are spheres. 
Since every $y \in S^+(x)$ is in $S^+_H(x)$, also $S_K^+(x)$
is a sphere. To see that $S^-_H(x)$ is a sphere
of co-dimension $k$ in the sphere $S^-_G(x)$ use induction and that
${\rm dim}(S^-_G(x))-k$ is smaller than $d-k$. 
The induction starts with $d-k=0$. We
additionally need to show that if $G$ is the boundary sphere of a
$(k+1)$-simplex, then $\{ f=P\}$ is $2$ point graph.
The complex $S^-(x)$ has $k+2$ vertices and $P$ is a $k+1$-partite complete graph with
$k+1$ slots. A map $\{1, \dots, k+2\}$ to $\{0,1,2, \dots, k\}$
must have exactly two points $a,b \in x$ which are mapped
into the same point. This checks that $\{f=P\}$ is a 0-sphere. 
\end{proof} 

\paragraph{}
Theorem~(\ref{1}) can be seen as a {\bf discrete inverse function theorem}. 
It is the discrete analog of the fact that if $f:M \to N$ is a smooth map from a d-manifold
$M$ to a $k$-manifold $N$ and $p$ is a regular value in $N$, then $f^{-1}(p)$ is a $d-k$-manifold.
The discrete Sard result \cite{DiscreteAlgebraicSets}
is a special case, where we used a particular partition complex (topology) on the finite 
target set $\{ -1,1\}^k$ of ${\rm sign}(f-c)$ given by a map 
$f: G \to \mathbb{R}^k$. Theorem~(\ref{1}) implies in particular that if
$f:G \to y=\{0,\dots, k\}$ is surjective, then the set of simplices $x$ with $f(x)=y$ is a 
$(d-k)$-manifold or empty. In \cite{KnillSard}, we looked maps $f:G \to \mathbb{R}$ which 
leads for $c$ not in $f(G)$ to $x \to {\rm sign}(x) \in \{-1,1\}$.
If $f:G \to \mathbb{R}$ has no root and takes both positive and negative
values, then $\{ f=0 \}$ is always a hyper-surface. This was already magic 
in the very special case $P=K_2$. 

\begin{figure}[!htpb]
\scalebox{0.02}{\includegraphics{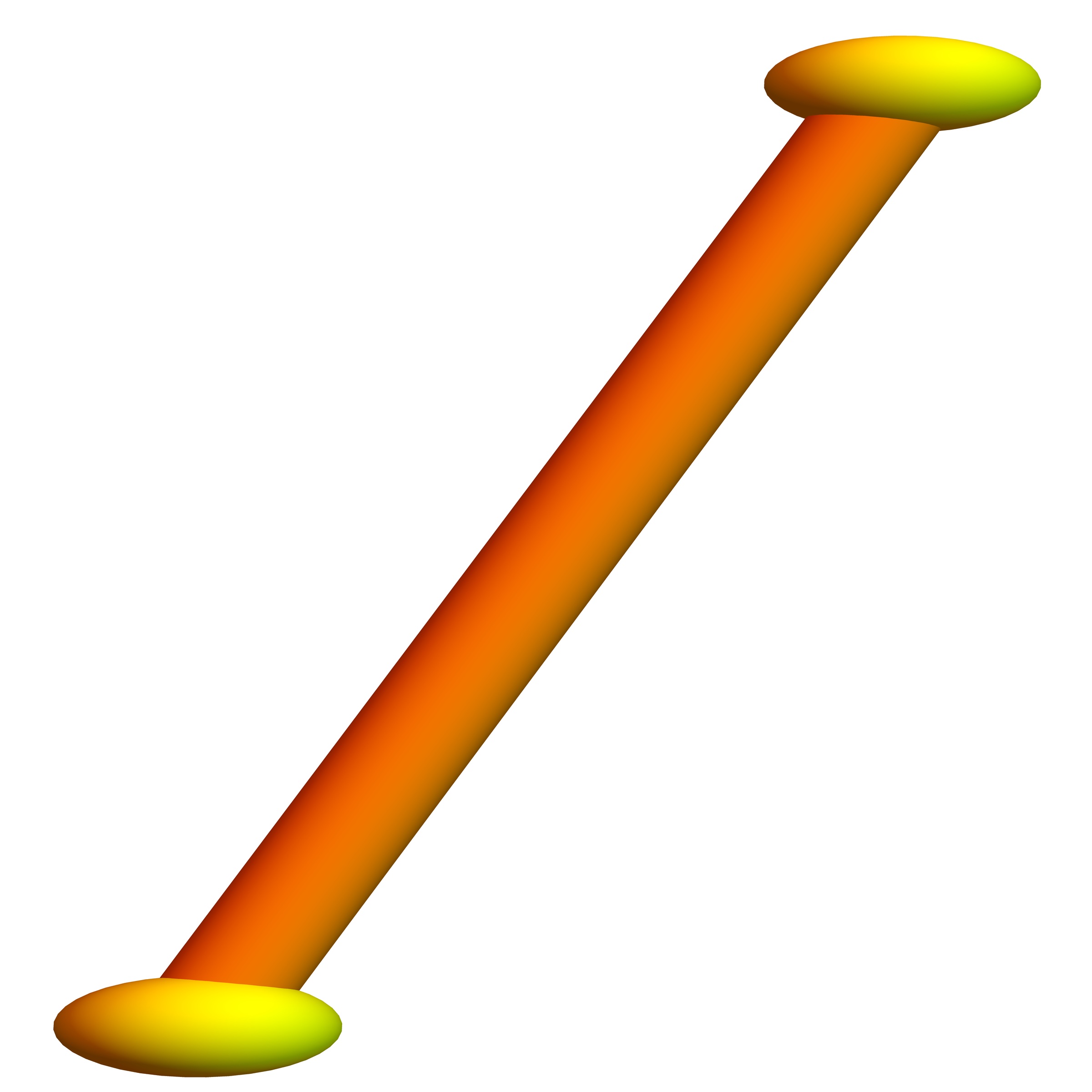}}
\scalebox{0.02}{\includegraphics{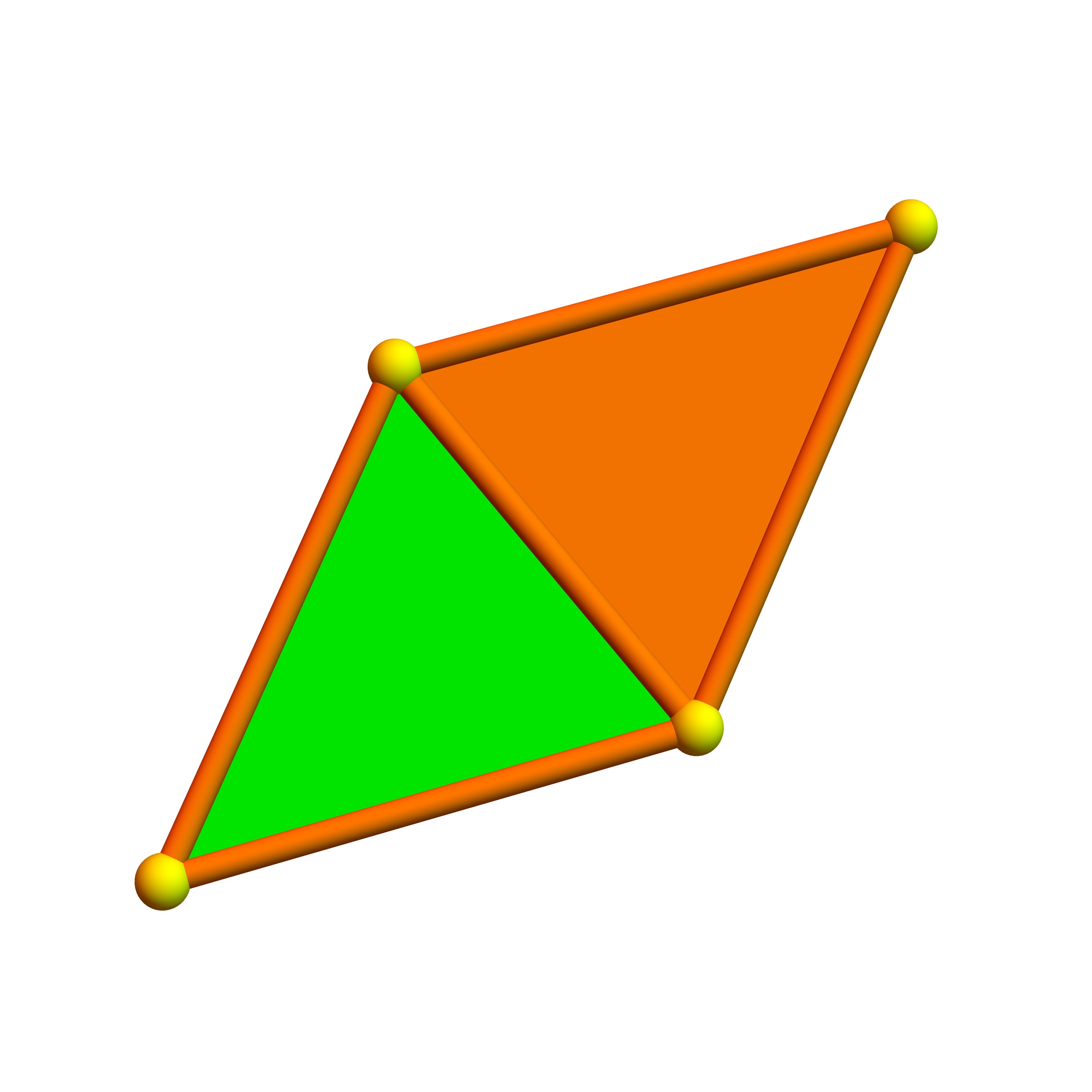}}
\scalebox{0.02}{\includegraphics{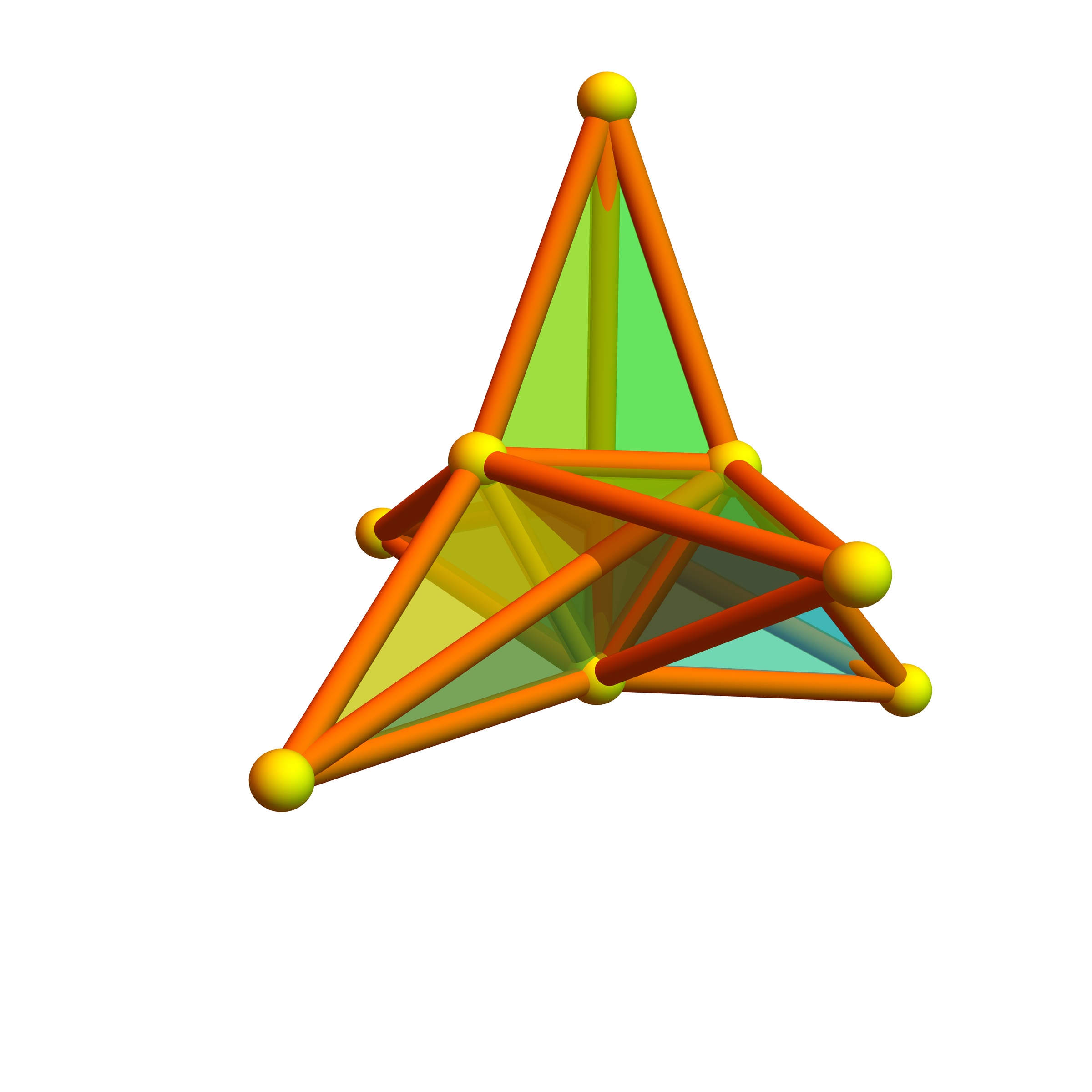}}
\caption{
For Sard $G \to \mathbb{R}$, where studying $\{ f =c \}$ is determined
by the map ${\rm sign}(f-c)$ onto $\{(-1,1)\}$, the complex
$K_2=K_{(1,1)}$ is the only relevant choice \cite{KnillSard}.  
For Sard $G \to \mathbb{R}^2$, where ${\rm sign}(f-c)$  map into 
$\{(-1,1),(1,1),(-1,-1),(-1,-1)\}$, we needed to pick the unique 2-dim 
partition complex $P=K_{(1,1,2)}$ that exists for $n=4$
\cite{DiscreteAlgebraicSets}. For $G \to \mathbb{R}^3$, the map
${\rm sign}(f-c)$ targets $\{-1,1\}^3$ which has 8 points. 
The 3-dimensional partitions of $n=8$ are 
$(5,1,1,1), (4,2,1,1),(3,3,1,1),(3,2,2,1), (2,2,2,2)$. 
In \cite{DiscreteAlgebraicSets}, we picked $p=(1,1,1,5)$ by forced the target signs
to be $(-1,1,1),(1,-1,1),(1,1,-1)$ and requiring  to reach a 4th point. 
This complex $P$ is displayed to the right. For maps into $\mathbb{R}^m$
we chose $p=(1,1,\dots, 1,2^m-m)$ in \cite{DiscreteAlgebraicSets}.
}
\end{figure}

\paragraph{}
For the classical Sard theorem, see \cite{Morse1939,Sard1942}. The discrete case
$f: G \to \mathbb{R}$ can be understood by looking at maps from $G$ to $\{-1,1\}$. This was
understood in 2015 \cite{KnillSard}. 
The tri-partite case came from the situation $(f,g):G \to \mathbb{R}^2$
where four different sign cases $\{ (-1,1),(1,1)$, $(-1,-1),(1,-1) \}$ are relevant.
We noticed in the fall of 2023 that the kite complex $P=K_{(1,1,2)}$ works 
\cite{DiscreteAlgebraicSets}. It was natural because there is exactly one partition 
of $n=4$ with dimension 2: $p=(1,1,2)$. Having looked at the $6$ different sub kite graphs 
of $K_4$ we started to wonder which maps $f$ from $G$ to any simplicial complex $P$ produces 
sub-manifolds: when does $f:G \to P$ always give sub-manifolds $H(G,f,P)$?
After experimenting with hundreds of target complexes $P$, it became clear around new 
years eve 2023/2024, that {\bf partition complexes} $\mathcal{P}$ do the job.  

\paragraph{}
Having a surjective map from a $d$-manifold to $K_2$ is equivalent to 
look for $\{ f=0\}$ for $G \to \mathbb{R}$ as we only need
to distinguish positive and negative values. This produces
a hyper-surface. For a surjective map to $P=K_3=K_{(1,1,1)}$, we get a 
co-dimension-$2$-manifold. The case $P=K_{(1,1,2)}$, the kite graph,
appears if $f$ takes values in $\mathbb{R}^2$, which means that there are
4 different sign values. The condition imposed in that paper paraphrases that
we impose the $K_{1,1,2}$ structure. We can however also look at maps
from a 4-manifold to $\{1,2,3,4,5\}$ and impose the $K_{(1,1,3)}$ structure on 
this set. $P=K_{(1,1,3)}$ is the windmill graph. An other possibility with 
a target set $\{1,2,3,4,5\}$ is the wheel graph $W_4=K_{(2,2,1)}$. In the 
case $n=6$, we also can get $K_{(2,2,2)}$, the {\bf Octahedron complex}. 

\paragraph{}
An interesting case is when the target complex
is $P=K_{(2,2,2,2)}$ which is a {\bf 3-sphere}, the 3-dimensional {\bf cross polytop},
one of the regular polytops in ambient dimension $4$. We can implement the 
{\bf quaternion group} $Q$ on $P$, as we can identify the $8$ points in $P$ with 
$1,-1,i,-i,j,-j,k,-k$. In other words, a map $f$ from $G$ to the quaternion group 
$Q$ which not have an Abelian image, produces a co-dimension $3$ manifold $H(G,f,P)$, 
where $P$ is the 3-sphere simplicial complex structure on $Q$ coming from the partition 
$p=(2,2,2,2)$. In particular, any map from a 4-manifold to $Q$ with this 
property produces a co-dimension $3$ manifold
$H(G,f,P)$ which must be a finite collection of 1-dimensional cycle graphs. 

\begin{coro}
If $G$ is a $d$-manifold and $f: G \to \{0,1,\dots,k\}$ is surjective
then $H$ is a $(d-k)$-manifold or empty.
\end{coro}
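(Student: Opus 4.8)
The plan is to recognize this corollary as the special case of Theorem~(\ref{1}) obtained from the integer partition $p=(1,1,\dots,1)$ with $k+1$ parts. For this partition each factor $\overline{K_{n_j}}=\overline{K_1}$ is a single point, so the partition complex is the join of $k+1$ points, which is the full $k$-simplex $P=K_{(1,\dots,1)}=K_{k+1}$, a $k$-dimensional member of $\mathcal{P}$. Identifying the target set $\{0,1,\dots,k\}$ with $V(P)$, a set map $f:V(G)\to\{0,1,\dots,k\}$ becomes exactly the kind of point map $f:V(G)\to V(P)$ to which Theorem~(\ref{1}) applies, and the output dimension $d-k$ matches the claim.

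First I would unwind what $H=H(G,f,P)$ is for this $P$. The complex $K_{k+1}$ has a single facet, namely the top simplex $\{0,1,\dots,k\}$. By the definition of $H$, it collects those simplices $y\in G$ for which $f(y)$ contains a facet of $P$, so here $H$ is precisely the set of simplices of $G$ whose vertices receive all $k+1$ colors, together with the simplices they contain. This is the expected level set: it realizes the fiber structure of the surjection at the simplicial level.

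The one point that needs care is that the hypothesis asks only for surjectivity of $f$ as a map of sets, whereas Theorem~(\ref{1}) requires $f$ to be $P$-onto, i.e.\ that its image be genuinely $k$-dimensional. These are not the same condition: surjectivity guarantees that each of the $k+1$ colors is used somewhere in $G$, but not that a single simplex carries all of them at once. I would therefore split into two cases. If the image of $f$ is $k$-dimensional, then $f$ is $P$-onto and Theorem~(\ref{1}) applies verbatim to give that $H$ is a $(d-k)$-manifold; if the image has dimension less than $k$, then no simplex of $G$ receives all $k+1$ colors, the unique facet of $P$ is never filled, and $H$ is empty. In either case $H$ is a $(d-k)$-manifold or empty. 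I expect the argument to be routine once the correct partition is identified, the only mild obstacle being this gap between set-theoretic surjectivity and the dimensional $P$-onto condition, which is absorbed cleanly by the ``or empty'' clause rather than requiring any extra work.
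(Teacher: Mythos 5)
Your proposal is correct and takes essentially the same route as the paper, which simply notes that the corollary is the special case $P=K_{(1,1,\dots,1)}=K_{k+1}$ of Theorem~(\ref{1}). Your extra case analysis distinguishing set-theoretic surjectivity from the $P$-onto condition (with the sub-$k$-dimensional image absorbed by the ``or empty'' clause) is a careful point the paper's one-line proof passes over silently, but it does not change the approach.
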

\begin{proof}
This is the special case $P=K_{(1,1,\dots, 1)} = K_{k+1}$
which belongs to the decomposition $k+1=1+1+\cdots+1$. In 
representation theory, this partition of $k+1$ is related to the 
sign representation.
\end{proof} 

\paragraph{}
The empty case is rare. For the smallest 3-sphere $G=K_{(2,2,2,2)}$ 
and the triangle $P=K_{(1,1,1)}$, there are
$5796$ surjective maps. There are only 24 cases, where $H$ is empty. They
are the maps to $\{1,2,3\}$ where only one number appears more than once.

\paragraph{} 
If $P$ is a $k$-dimensional partition complex and $y$ is a facet in $P$
which is in the image of $f$, then $f^{-1}(y)$ is a $(d-k)$-{\bf manifold
with boundary} or empty. This allows us to construct without much effort manifolds with 
boundary or pictures of {\bf cobordism examples}:
if the boundary of the manifold is split into two sets $A,B$, then $A,B$ are
by definition cobordant. In illustrations done before, like for the 
expository article \cite{knillcalculus} in 2012, we needed to 
construct by hand the "hose manifold". We later would use triangulations
which numerical methods would provide when parametrizing surfaces $\vec{r}(u,v)$
where $(u,v) \subset \mathbb{R}^2$ is a closed region in the parameter plane. 
The code provided in the code section is shorter than what one needs to get
in order to rely on numerical schemes used by computer algebra
systems for drawing surfaces. The code works for manifolds of arbitrary dimension. 

\begin{coro}
Every $(d-k)$-manifold $H(G,f,P)$ is the union of patches $f^{-1}(y)$, each
of which is a $(d-k)$-manifold with boundary. 
\end{coro}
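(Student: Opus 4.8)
The plan is to read off the decomposition directly from the structure of $H=H(G,f,P)$ established in Theorem~(\ref{1}), and then to identify each patch as an instance of the same inverse-function mechanism applied to a single chart of $P$. First I would record that a vertex of $H$ is a simplex $x \in G$ with $f(x)$ equal to a facet of $P$: indeed $f(x)$ is a simplex in the $k$-dimensional complex $P$, so if it contains a facet it must coincide with it. Grouping the vertices of $H$ according to which facet $f(x)$ equals gives the sets indexing the patches $f^{-1}(y)$, one for each facet $y$ in the image of $f$. The key observation is that every simplex of $H$, being a chain $x_0 \subset \cdots \subset x_\ell$ in $G$, maps to a chain of facets $f(x_0) \subseteq \cdots \subseteq f(x_\ell)$ all of which are maximal, hence equal; so each simplex of $H$ lies over a single facet and $H=\bigcup_y f^{-1}(y)$.

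Next I would show that each patch is a $(d-k)$-manifold with boundary, which is the assertion already recorded before the corollary and whose proof I would obtain by relativizing Theorem~(\ref{1}). Fix a facet $y$ and pass to the region $f^{-1}(\overline{y}) \subseteq G$, the pull back of the closed simplex $\overline{y}=K_{(1,\dots,1)}$, itself a $k$-dimensional partition complex in $\mathcal{P}$. On this region I would rerun the hyperbolic-sphere induction of Theorem~(\ref{1}): for a vertex $x$ of the patch whose image already equals $y$, the computation of $S_G(x)=S_G^+(x) \oplus S_G^-(x)$ produces, exactly as before, a $(d-k-1)$-sphere, so $x$ is an interior point. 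For a vertex $x$ lying over a proper face of $y$ --- a ridge of the simplex $\overline{y}$ --- the same link computation, now carried out over a boundary face rather than the top cell, yields a hemisphere instead of a full sphere, which is precisely the local model of a boundary point. Assembling these local models exhibits $f^{-1}(y)$ as a $(d-k)$-manifold with boundary, the boundary sitting over $\partial \overline{y}$.

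Finally I would check that the union is a gluing along boundaries, recovering the closed manifold $H$ and the cobordism picture. Two patches $f^{-1}(y)$ and $f^{-1}(y')$ meet exactly over the ridges shared by the facets $y$ and $y'$, and there their boundary pieces coincide, so the patches are glued along common boundary components; a ridge of $y$ contained in no other facet contributes instead to the free boundary of the total complex, which is empty precisely when $P$ has a single facet and $H$ is closed. Splitting this free boundary into two parts $A,B$ then realizes $A,B$ as cobordant via the manifold-with-boundary $f^{-1}(y)$. The main obstacle I expect is the boundary analysis of the second paragraph: making the relative version of the inductive sphere argument precise, i.e.\ proving that the link of a patch vertex lying over a boundary face of $y$ is genuinely a hemisphere and that these hemispheres fit together into an honest $(d-k-1)$-manifold boundary.
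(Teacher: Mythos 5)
Your proof collapses at its very first step, and the error propagates through everything that follows. You claim that for a vertex $x$ of $H$ the image $f(x)$ is a simplex of $P$ and must therefore equal a facet. This is false: $f$ is only a vertex map $V(G)\to V(P)$, so $f(x)$ is an arbitrary subset of $V(P)$, and membership in $H$ requires only that it \emph{contain} a facet; it is typically not a simplex of $P$ at all. Concretely, take $G=K_{(2,2,2)}$ (the octahedron, a $2$-manifold) with antipodal pairs $\{1,2\},\{3,4\},\{5,6\}$, take $P=K_{(1,2)}$ with parts $\{a\}$ and $\{b_1,b_2\}$, and let $f$ send $1,2\mapsto a$, $3,4\mapsto b_1$, $5,6\mapsto b_2$. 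Every triangle of $G$ lies in $H$ and has image $\{a,b_1,b_2\}$, which is not a simplex of $P$ (its two $b$-vertices are not joined) and contains \emph{both} facets. So your conclusion that each simplex of $H$ lies over a single facet, i.e.\ that the patches partition $H$, is wrong, and it contradicts your own third paragraph where you glue patches along shared boundary. In reality the patches $f^{-1}(y)=\{x: f(x)\supseteq y\}$ overlap, and the overlap is exactly the gluing locus: in the example, each patch is a disjoint union of four $3$-vertex paths (intervals), the two patches share the eight triangles as their endpoints, and gluing along these shared points produces $H$, which here is two disjoint $8$-cycles.

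The second concrete error is your identification of the boundary: a simplex ``lying over a proper face of $y$'' contains no facet of $P$ in its image, so it is not in $H$, let alone in the patch. The boundary of $f^{-1}(y)$ consists instead of those $x$ whose image \emph{strictly} contains $y$ --- equivalently, since in a partition complex any extra vertex over $y$ produces a second facet, those $x$ belonging to at least one other patch. Likewise your claim of a possible ``free boundary'' of the total complex is vacuous here: the corollary assumes $H$ is a $(d-k)$-manifold, which by Theorem~(\ref{1}) is closed, so the patch boundaries always cancel in pairs. The paper's proof needs none of your facet bookkeeping: it reruns the hyperbolic decomposition of Theorem~(\ref{1}) inside a patch, observing that $S^+_H(x)=S^+_G(x)$ is unchanged while $S^-_H(x)$ is now either a sphere or a ball, and the induction terminates in the dichotomy of a $2$-point complex (interior point) versus a $1$-point complex (boundary point). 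Your instinct to relativize the theorem's induction is the right one and is close in spirit to the paper, but the local models you propose do not describe the actual combinatorics of the patches.
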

\begin{proof}
We can follow the same argument as in the theorem. 
If $x$ is in $f^{-1}(y)$, we again look at $S_G(x)=S_G^-(x) + S_G^+(x)$.
Again $S_H^+(x) = S_G^+(x)$, but now it is possible that $S_H^-(x)$ is a 
either a hypersphere in the sphere $S^-(x)$ or then co-dimension $1$ manifold
with boundary which means it is a ball. Induction brings us down to the 
case where $S_H^-(x)$ is either a 2 point complex or a 1 point complex. 
In the former case, we have an interior point, in the later a
boundary point. 
\end{proof} 

\begin{figure}[!htpb]
\scalebox{0.08}{\includegraphics{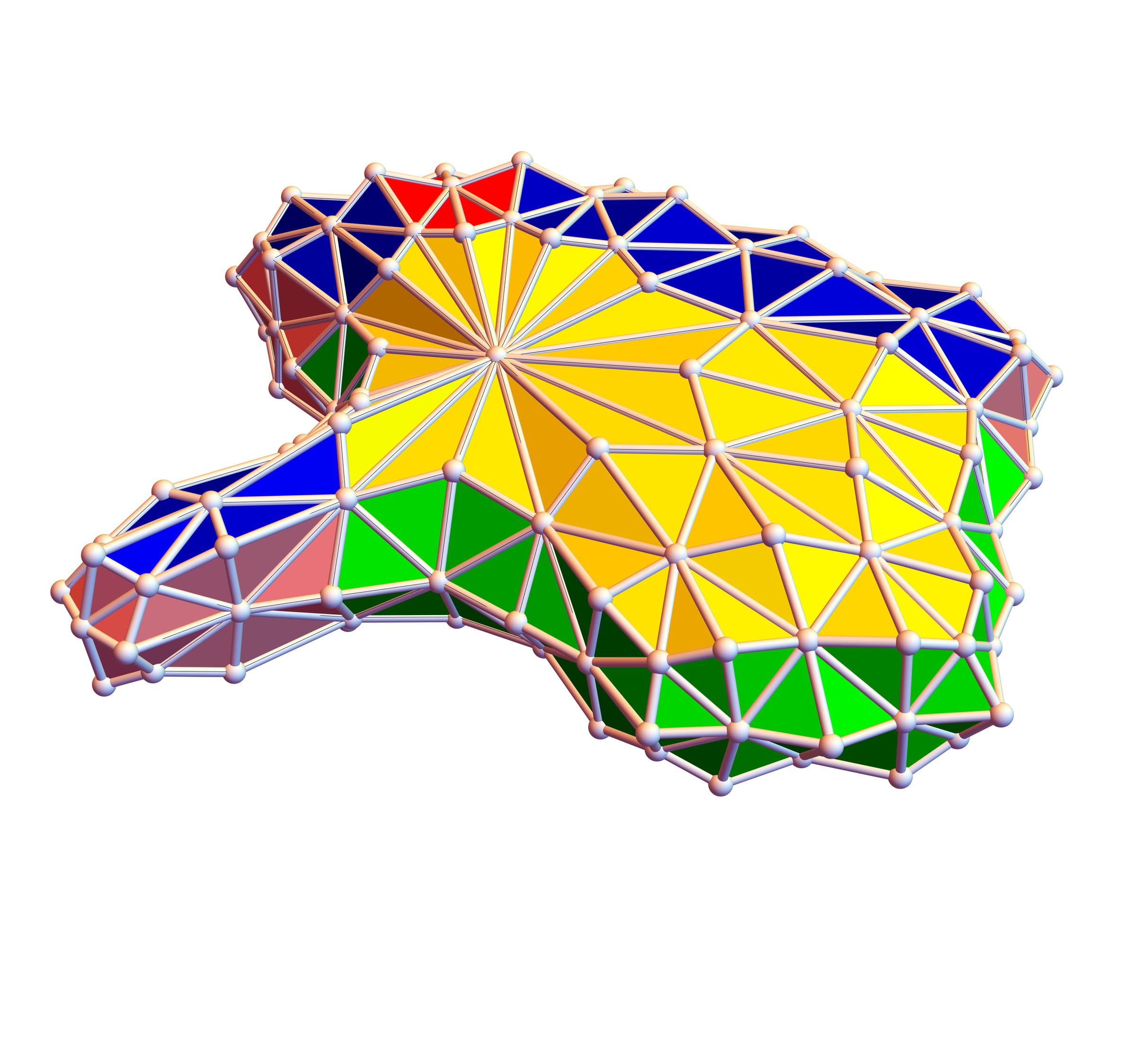}}
\caption{
As a 4-manifold $G=K_{(2,2,2)} \oplus C_{10}$ we took the join 
of the Octahedron complex with a circular complex. The
2-dimensional partition complex $P$ is by $p=(1,2,3)$ and a random function
from $G$ to $\{1,\dots,6\}$.  Now we can look 
at the $6$ facets $y$ of $P$ and color each of these patches
$f^{-1}(y)$ differently. The 6 patches cover the manifold. The 
number of patches for $n=(n_0, \dots, n_k)$ is $\prod_{j=0}^k n_j$. 
}
\end{figure}

\section{Partitions}

\paragraph{}
The semi-ring of {\bf partition complexes} $\mathcal{P}$ contains the semi-ring of integers 
and is contained in the Sabidussi ring $\mathcal{G}$ of all graphs, 
where the addition is the Zykov join \cite{Zykov} and the multiplication is the
large multiplication \cite{Sabidussi}. See \cite{HammackImrichKlavzar} for graph 
multiplications. The semi-ring is dual to the Shannon semi-ring, where the addition is 
the disjoint union
and the multiplication is the strong product \cite{Shannon1956}. Each of the rings
can be augmented to rings by group completing the additive monoid structure.
In the Zykov-Sabidussi picture, the ``integers" $\mathbb{Z}$ are given by complete 
graphs $K_n$. Because the join $K_n \oplus K_m = K_{n+m}$ and the large 
multiplication $K_n \otimes K_m = K_{n m}$ the map $n \to K_n$ is an 
isomorphism of $\mathbb{Z}$ to $\mathcal{Z}$. 
When written as partition, $K_n = K_{(1,1,\dots, 1)}$. The complex $K_{(n)}$ is
classically denoted with $\overline{K_n}$. 

\paragraph{}
For partitions, $n=(n_0, \dots, n_k)$, $m=(m_0, \dots, m_l)$, 
the addition is the concatenation $n+m = (n_0, \dots, n_k,m_0, \dots, m_l)$. 
Multiplication is product 
$n*m = (n_0 m_0, n_1, m_0, \dots, n_k m_0, n_1 m_0$, $\dots$, $n_1 m_l, \dots, n_k m_0 \dots, n_k m_l)$.
It obviously enhances the standard multiplication in the case $0$-dimensional case $k=0,l=0$. 
The order in a partition does not matter. The partition $(2,3)$ is the same than the partition $(3,2)$. 
For example $(3,4) + (2,1) = (3,4,2,1)$ and $(3,4)*(2,1,1) = (3,3,4,4,6,8)$ or 
$(1,1,1)*(2,3,4) = (2,2,2,3,3,3,4,4,4)$.
Also partitions form so a semi-ring $(P,+,*,0,1)$, and can be enhanced to be a ring. 
The dimension of a partition $n$ is the number of elements minus $1$.
Giving a partition on $[1,n]=\{1,\dots, n\}$ into $k+1$ sets defines a $k$-dimensional 
simplicial complex and so a k-dimensional Alexandrov topology on $[1,n]$. The maximal 
possible dimension $n-1$ is achieved, if we take the partition $(1,1,\dots, 1)$ of $n$.
In the zero dimensional case $n=(n)$, we have the discrete topology.  

\paragraph{}
Every partition $p:$  $n=(n_0,\dots,n_k)$ defines a {\bf $(k+1)$-partite graph} 
$K_p = K_{(n_0, \dots, n_k)}$ defined as the join 
$\oplus_{j=0}^k \overline{K_{n_j}}$ of the $0$-dimensional graphs
$K_{(n_j)} = \overline{K_{n_j}}$. 
A partition of dimension $k$ defines a $k$-dimensional graph in the sense that the
maximal clique has has size $k+1$. The
multiplication of partitions corresponds to the {\bf large multiplication} of graphs
first considered by Sabidussi. Any partition in which one of the elements is prime therefore is a 
multiplicative prime in this ring. The additive primes in the partition ring
are the 0-dimensional partitions $(n)$. They can not be written as a sum of smaller partitions. 

\paragraph{}
The complete $(k+1)$-partite graphs are defined by one of the $p(n,k+1)$ partitions
$n_0,\dots,n_k$ of $n$. For $n=7$ for example, 
there are $p(7)=15$ partitions over all. For $n=4$, there is only one 2-dimensional complex,
the case when $p=(1,1,2)$. For $k>0$, the graphs are (k+1)=partite graphs $K_{(n_0,\dots, n_k)}$.
For $k=0$, we have $K_{(n)} = \overline{K_n}$ which is the zero-dimensional 
graph with n-vertices.  \footnote{Mathematica identifies $CompleteGraph[\{5\}]$ with 
$CompleteGraph[\{1,1,1,1,1\}]$ which is inconsistent with the picture that
it should be the join of $k$ graphs $K_1$. Of course, one should keep the notation
$CompleteGraph[5]$ as $K_5$ as it is entrenched. We use 
$GraphComplement[CompleteGraph[{5}]]$ to get the $0$-dimensional $K_{(5)}$. }

\paragraph{}
The theory of partitions is rich. Euler already noticed that
taking products of geometric series produces the 
generating function for partitions
$\sum_n p(n) x^n = \prod_{k} (1-x^k)^{-1}$. 
The fact that partitions naturally define a ring, extending the 
ring of integers, appears to be less known. To see a partition 
as a {\bf choice of topology} on $[1,n]$ even less. 
We actually consider a partition $p$ as a partition complex $P$ 
on $[1,n]$ and so as a {\bf  topological structure}
of dimension $k$ on the ``point" $[1,n]$. A map $f:G \to P$ 
is {\bf regular}, if the image is $k$ dimensional. This notion replaces
the maximal rank condition of the Jacobian matrix $df$ of 
a smooth map $f$. An other picture is to see $P$ as a ``particle" as
Wigner proposed in 1939 already to see irreducible representations of 
groups as particles. 

\begin{figure}[!htpb]
\scalebox{0.12}{\includegraphics{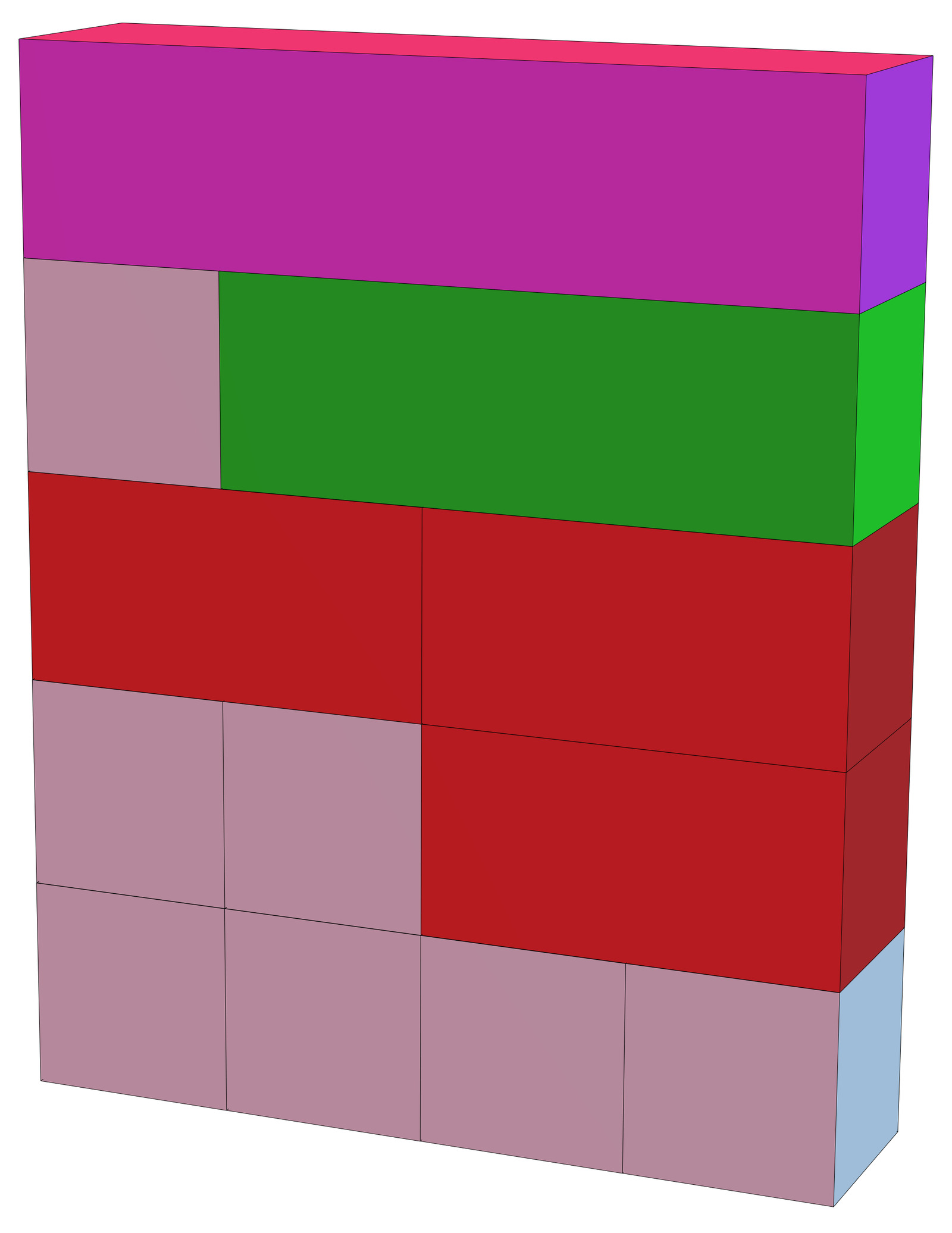}}
\scalebox{0.12}{\includegraphics{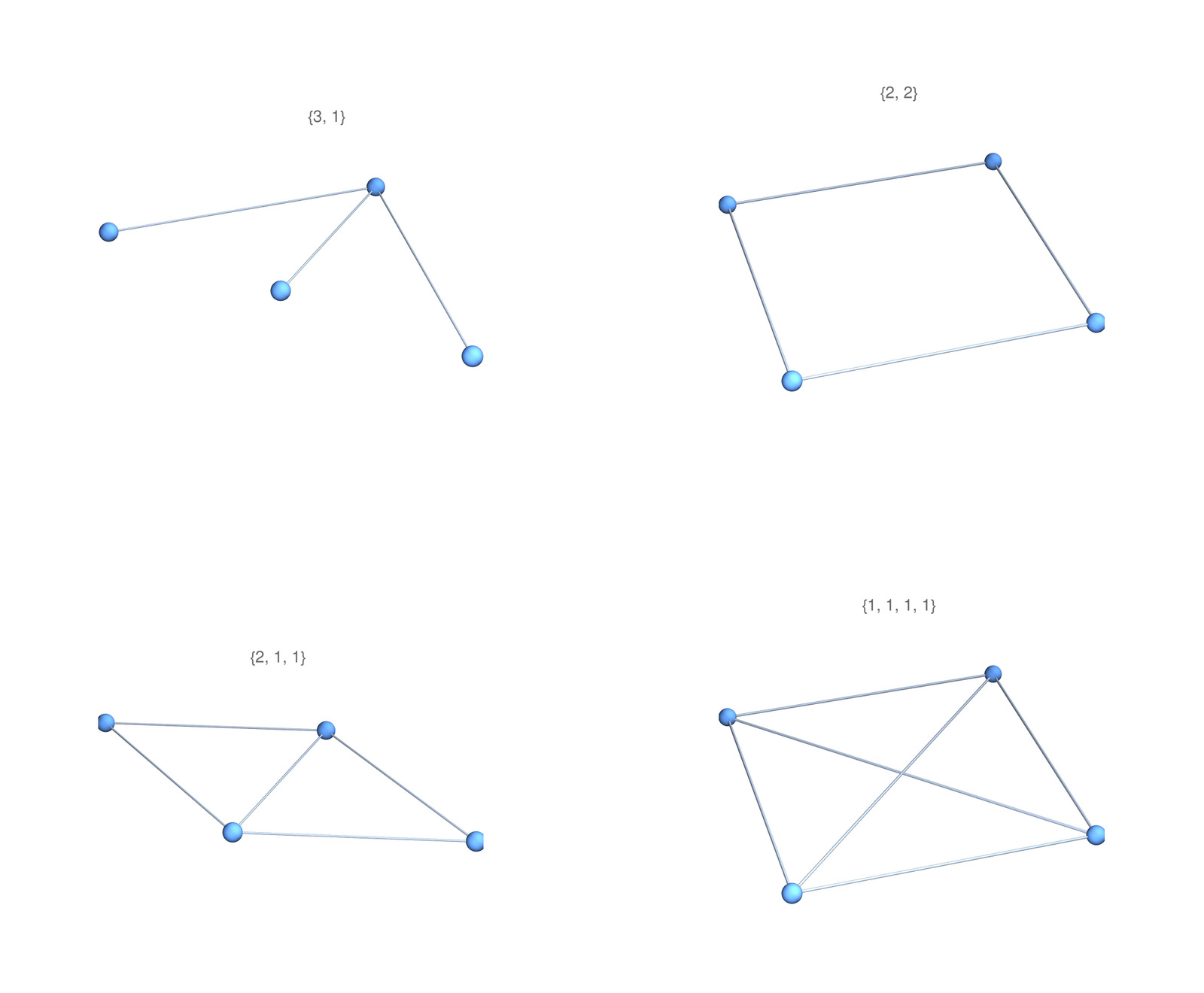}}
\caption{
We see the 5 integer partitions for $n=4$ and the corresponding
4 positive dimensional complexes. We left out 
$K_{\{5\}} = \overline{K_5}$. There is only one 2-dimensional
complex $P=K_{(1,1,2)}$, the kite complex. This complex was involved when
looking at maps $G \to \mathbb{R}^2$, where $\{ f = 0, g=0\}$ 
involved the 4 cases $\{ 1,1\},\{1,-1\},\{-1,1\},\{-1,-1\} \}$
and where we had imposed that 2 points $(1,-1),(-1,1)$ and
at least one third point are in the image $f(G)$. We rephrase this now
as that the image has to contain at least one maximal simplex in $P$. }
\end{figure}

\begin{figure}[!htpb]
\scalebox{0.12}{\includegraphics{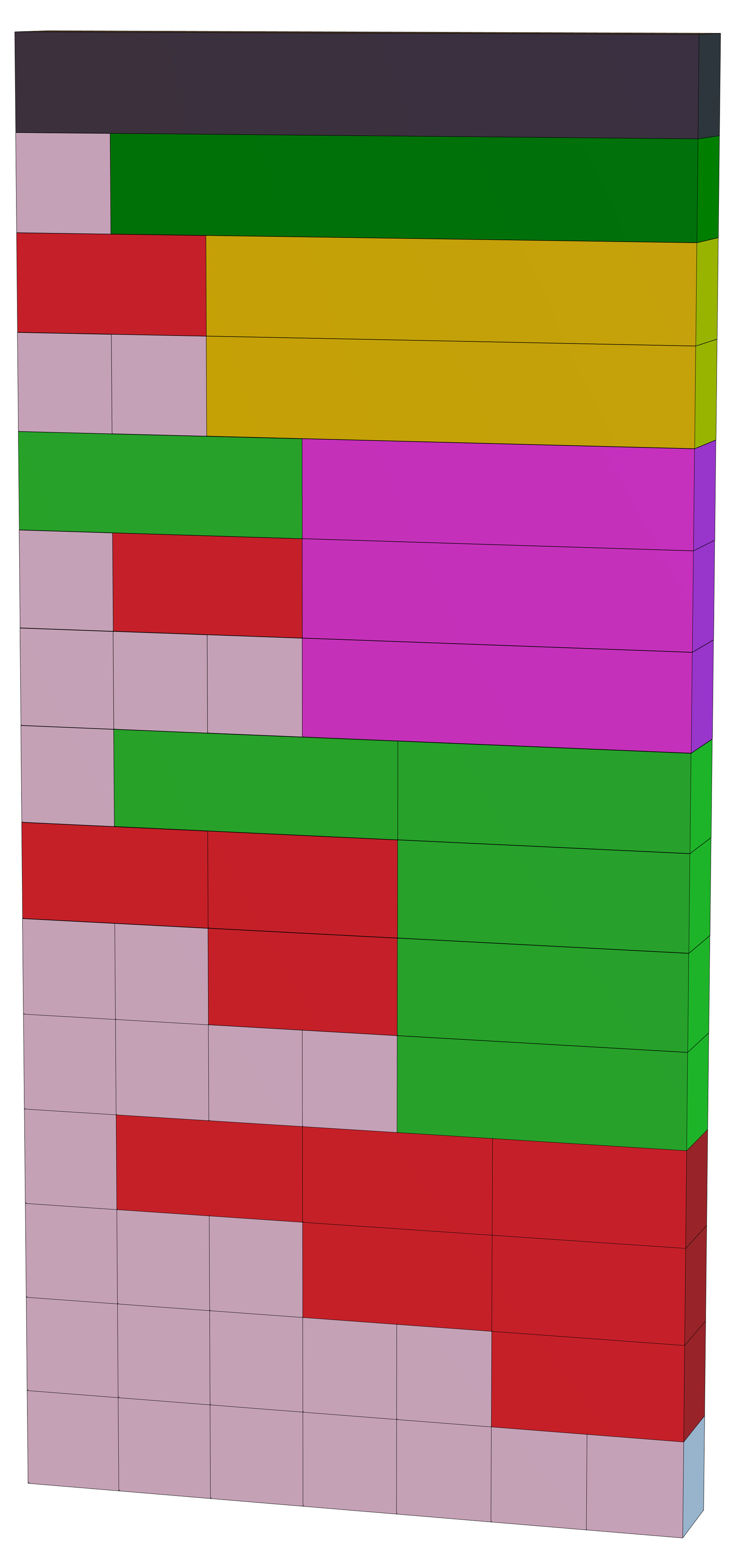}}
\scalebox{0.12}{\includegraphics{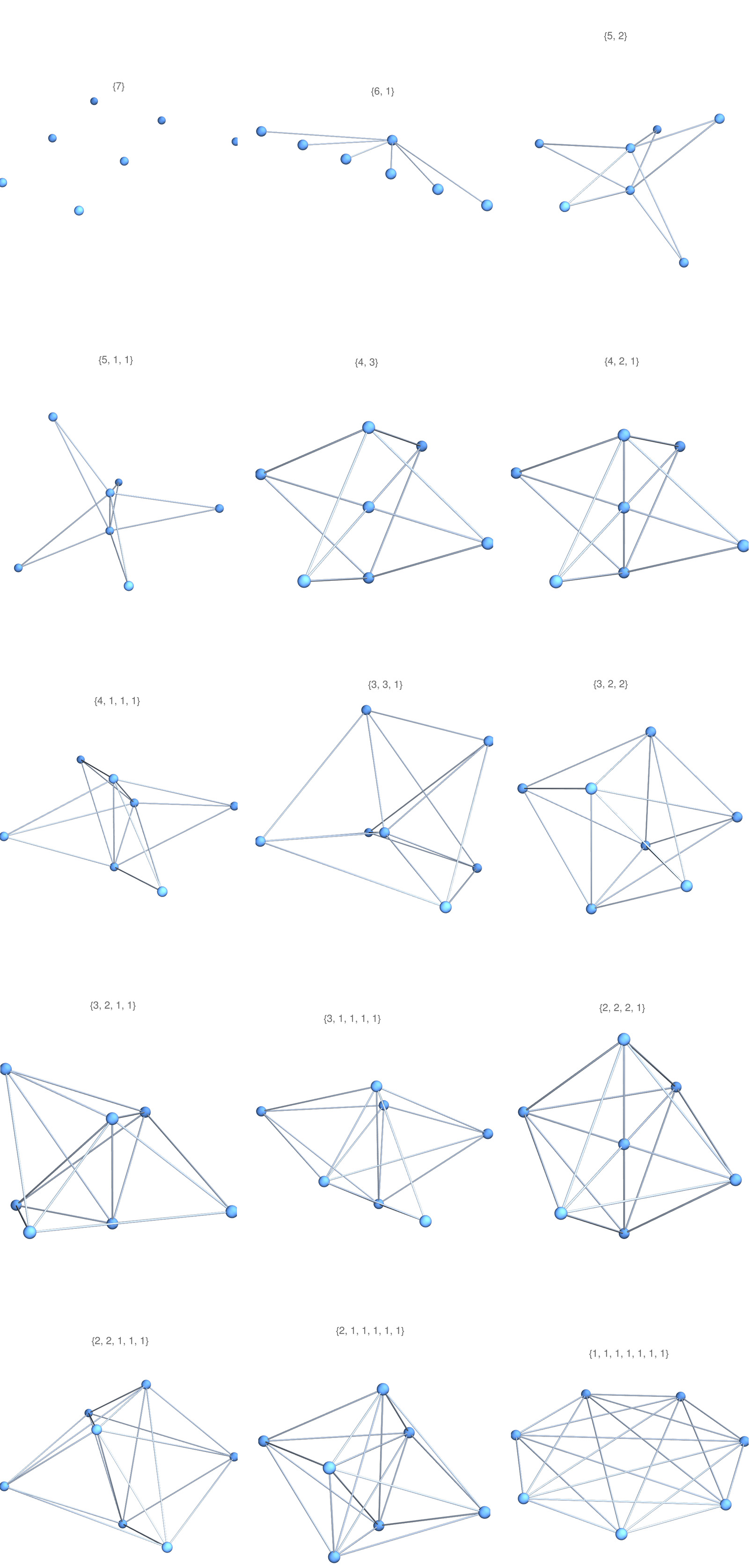}}
\caption{
All 15 integer partitions for $n=7$ and the corresponding
15 complexes. We used Cuisenaire tools in first grade 
\cite{CuisenaireGattegno} were partitions were studied for
the competition \cite{KnillSchweizerJugendForscht}. 
As far as I know, this was the first use of such a 
pedagogical tool for research rather than for teaching. 
Beside known results like the Euler pentagonal theorem, 
unusual results appeared like
like $p(n)=\sum_{k=1}^n \sigma(k) p(n-k)/n$, where
$\sigma(k)$ is the total number of prime factors of $k$.
This can be seen by looking at $n p(n)$ as the area of the
Cuisenaire rectangle listing all the partitions. 
}
\end{figure}

\paragraph{}
Lets look at some figures. The first figures
shows the case of a $5$ manifold $G=I \oplus I$,
with icosahedron 2-sphere $I$ to $P=K_{(1,1,1,1)}$. 
We took then a random map from the
vertex set $\{1,\dots,24\}$ to $\{1,2,3,4\}$ like 
$(3, 4, 1, 2, 2, 3, 4, 2, 2, 2, 2, 1, 4, 4, 
4, 2, 1, 4, 3, 1, 3, 4, 4, 4)$ for the picture. 
Each of the 274701298344704 surjective maps 
from $\{1,\dots,24\}$ to $\{1,2,3,4\}$ produces a 
2-manifold. There is no exception. 

\begin{figure}[!htpb]
\scalebox{0.1}{\includegraphics{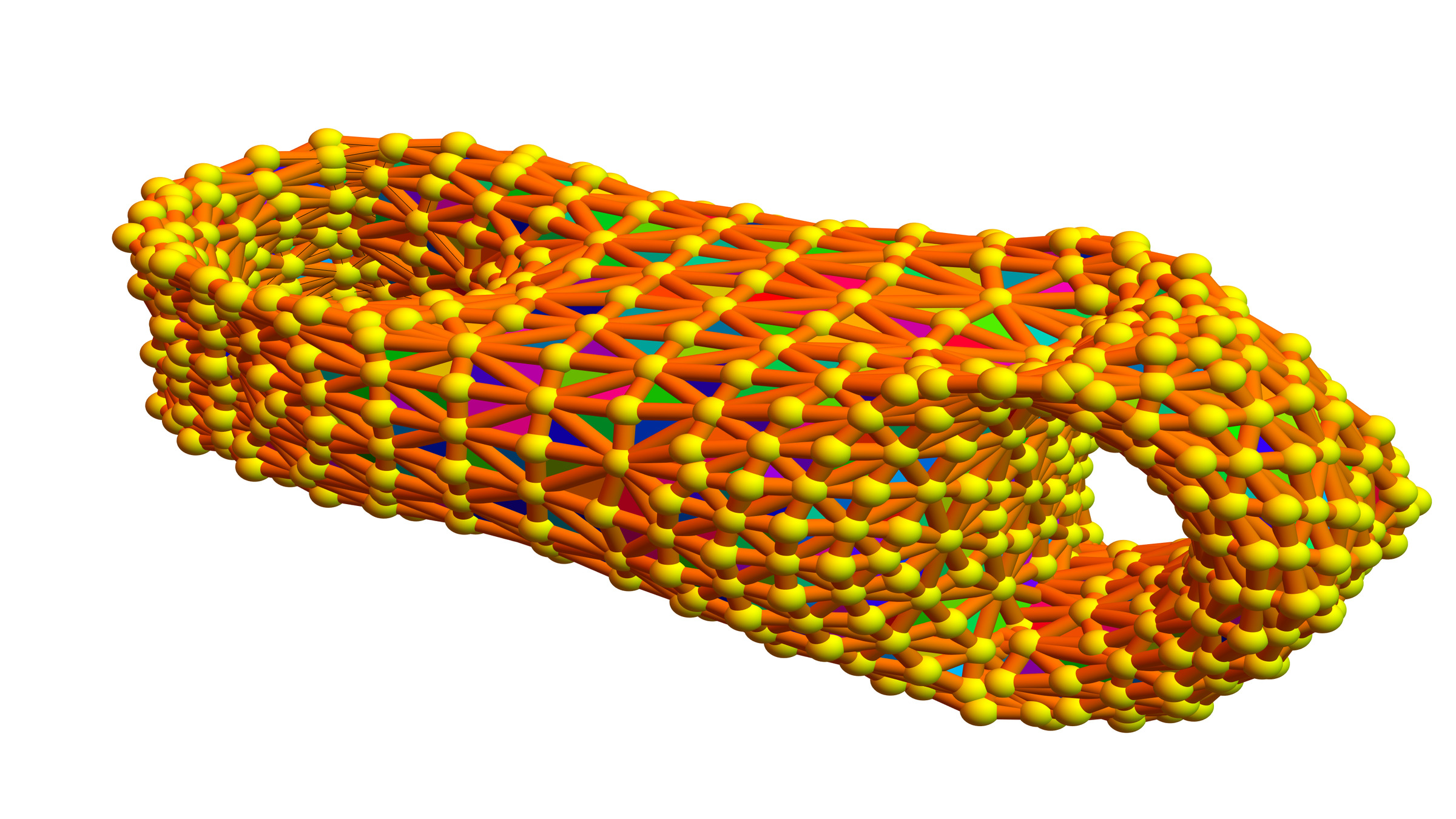}}
\caption{
A 2-manifold obtained as a co-dimension-3 manifold in the 
5-manifold $S^2 \oplus S^2$, where $S$ is an icosahedron complex.
The target space was the 3-dimensional $K_4$. In this case, we got a
genus $2$ surface $H(G,f,P)$. 
}
\end{figure}

\begin{figure}[!htpb]
\scalebox{0.2}{\includegraphics{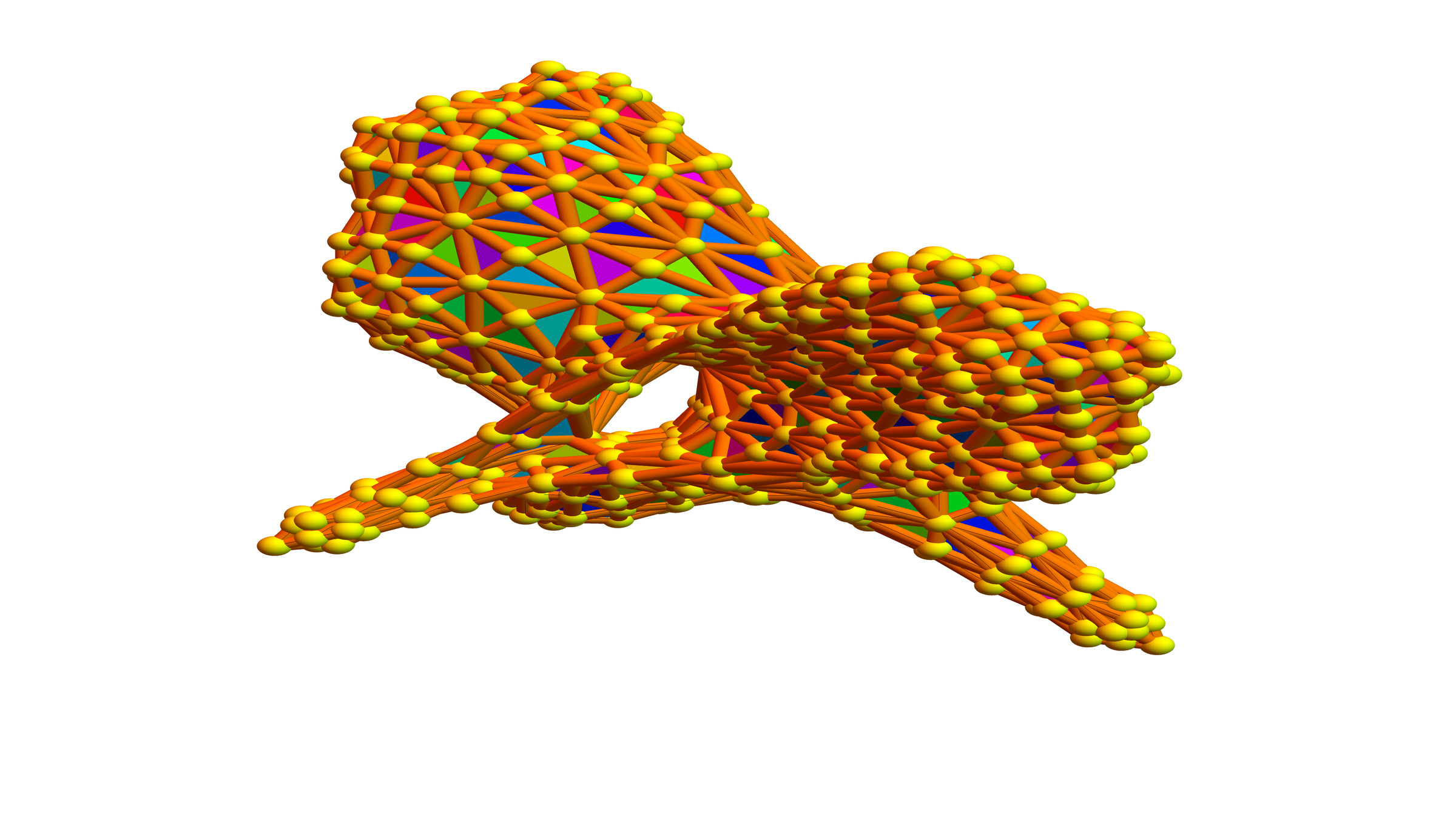}}
\caption{
A 2-manifold obtained by taking the target complex $P=K_{(2,2,3)}$ and 
$G=I \oplus C_{13}$, where $I$ is the icosahedron complex.
While the graph $G$ has only $12+13=25$ vertices, the graph $H(G,f,P)$
is much larger than $G$ as it is a sub-graph of the Barycentric refinement of $G$. 
}
\end{figure}

\paragraph{}
The additive Zykov monoid $(\mathcal{G},\oplus)$ of graphs contains various interesting 
sub-monoids. One of them is the monoid $(\mathcal{S},\oplus)$ of spheres. 
If $A$ is a $k$-sphere and $B$ is a $l$-sphere, then $A+B$ is a $k+l+1$ sphere as indicated
already in the introduction. Manifolds are finite simple graphs for which all unit spheres 
$S(x)$ are in $\mathcal{S}$. Manifolds however are not a sub-monoid and spheres do not form 
a subring. The Dehn-Sommerville monoid $\mathcal{D}$ is interesting as it generalizes spheres.
See \cite{dehnsommervillegaussbonnet}. 
The class $\mathcal{D}$ is inductively defined as a graph $G$ for which all unit spheres are 
in $\mathcal{D}$ of dimension $1$ less and such that $\chi(G) = 1+(-1)^d$, where $d$ is 
the dimension starting with the assumption that the empty graph $0$ in $\mathcal{D}$ of dimension $-1$.
Also the Dehn-Sommerville class is not invariant under multiplication.
Interesting about Dehn-Sommerville is that unlike spheres, that the definition does not invoke 
homotopy notions. 

\paragraph{}
We now look at a geometric characterization of the partition monoid $(\mathcal{P},\oplus)$. 
Let $\mathcal{Q}$ be the set of graphs which have the property that it contains $0$ and
such that it is closed under taking unit spheres: all $S(x)$ are in $\mathcal{Q}$ of
one dimension less. The Partition monoid actually agrees with this elegant definition: 

\begin{lemma}
$\mathcal{P}=\mathcal{Q}$. 
\end{lemma}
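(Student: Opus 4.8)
The plan is to prove the two inclusions $\mathcal{P}\subseteq\mathcal{Q}$ and $\mathcal{Q}\subseteq\mathcal{P}$ separately, in each case by induction on the dimension $k$, with the empty complex $0$ serving as the common base case (it lies in both classes by definition).

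For $\mathcal{P}\subseteq\mathcal{Q}$ I would use the join--sphere identity recalled in the introduction, namely $S_{A\oplus B}(x)=S_A(x)\oplus B$ when $x\in A$. Writing a partition complex as $P=\overline{K_{n_0}}\oplus\cdots\oplus\overline{K_{n_k}}$ and fixing a vertex $x$ in the $j$-th block, the internal sphere $S_{\overline{K_{n_j}}}(x)$ is empty, because $\overline{K_{n_j}}$ carries no edges; hence $S_P(x)=0\oplus\bigoplus_{i\neq j}\overline{K_{n_i}}=\bigoplus_{i\neq j}\overline{K_{n_i}}$. This is again a partition complex, now of dimension $k-1$, and so it lies in $\mathcal{Q}$ by the induction hypothesis. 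Since $0\in\mathcal{Q}$ settles $k=-1$ and edgeless graphs $\overline{K_n}$ (all of whose unit spheres are empty) settle $k=0$, every $P\in\mathcal{P}$ has all unit spheres in $\mathcal{Q}$ and therefore lies in $\mathcal{Q}$.

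For the converse $\mathcal{Q}\subseteq\mathcal{P}$ I would again induct on $k$. If $G\in\mathcal{Q}$ has dimension $0$, then every unit sphere is empty, so $G$ has no edges and $G=\overline{K_n}=K_{(n)}\in\mathcal{P}$. For $k\ge 1$, every unit sphere $S(x)$ lies in $\mathcal{Q}$ of dimension $k-1$, hence in $\mathcal{P}$ by induction: each link is already a complete multipartite complex. The remaining task is to reassemble these local join decompositions into a single global one, that is, to show that $G$ itself is complete multipartite. Concretely, I would introduce the relation ``$a$ and $b$ are equal or non-adjacent,'' prove it partitions $V(G)$ into blocks, and verify that any two vertices in distinct blocks are joined by an edge; the blocks then realize $G$ as the join $\bigoplus_j \overline{K_{n_j}}$ and exhibit $G\in\mathcal{P}$.

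The heart of the matter, and the step I expect to be the main obstacle, is exactly this globalization: proving that the non-adjacency relation is transitive. Local data alone is delicate here, since a graph can have complete multipartite links at every vertex without being complete multipartite as a whole, so the argument must genuinely exploit that membership in $\mathcal{Q}$ is recursive at every face and every level, and not merely at vertices. My approach would be to fix an edge $a\sim b$ together with a third vertex $c$, pass to a unit sphere (or an iterated unit sphere) that simultaneously sees the relevant adjacencies among $a,b,c$, and use the join structure already available there to rule out the forbidden ``edge plus isolated vertex'' configuration. Propagating this compatibility through all vertices, so that the block relation becomes a genuine equivalence relation, is where the real work lies; once that is secured, the join decomposition and hence $G\in\mathcal{P}$ follow at once, completing the induction.
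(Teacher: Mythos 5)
Your inclusion $\mathcal{P}\subseteq\mathcal{Q}$ is correct and is the same argument as part (i) of the paper's proof: a vertex $x$ in the block $\overline{K_{n_j}}$ has $S_P(x)=K_{(n_0,\dots,n_{j-1},n_{j+1},\dots,n_k)}$, a partition complex of one dimension less, and induction closes the recursion. The base case of the converse (dimension $0$ forces all unit spheres to be empty, hence $G=\overline{K_n}=K_{(n)}$) also coincides with the paper's part (iii).

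The inductive step of the converse is exactly where your proposal stops being a proof, and the repair you sketch cannot be carried out. You correctly isolate the decisive claim --- that ``equal or non-adjacent'' is an equivalence relation, equivalently that the induced ``edge plus isolated vertex'' configuration ($a\sim b$ with $c$ adjacent to neither) is impossible --- but you only announce a strategy: pass to a unit sphere that sees $a,b,c$ simultaneously. Such a sphere $S(v)$ exists only if $v$ is a common neighbor of all three vertices, and in the very configuration you must exclude no such $v$ need exist. Your own cautionary remark is the definitive obstruction: for a cycle $C_n$ with $n\geq 5$, every unit sphere is $K_{(2)}=\overline{K_2}$ and every iterated unit sphere is again a partition complex, so the recursive unit-sphere data cannot distinguish $C_n$ from a complete multipartite graph; no argument that propagates only through unit spheres ``at every face and every level,'' as you put it, can rule out the forbidden configuration. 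The paper closes this gap with a global statement, its step (ii): for non-adjacent $x,y$ the unit spheres coincide, $S(x)=S(y)$, hence $B(x)=B(y)$, so all vertices non-adjacent to $x$ assemble into a single zero-dimensional summand and $G=S(x)\oplus K_{(n)}=K_{(n_0,\dots,n_d,n)}$. That equality of spheres is precisely the statement your write-up defers, so as it stands you have proved one inclusion and only the base case of the other. (Be aware that the paper states (ii) for $G\in\mathcal{P}$ and then invokes it for $G\in\mathcal{Q}$; the $C_n$ example shows that proving it for $\mathcal{Q}$ requires input beyond the unit-sphere recursion itself, such as the closure of the class under intersections of unit spheres that the paper discusses after the lemma.)
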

\begin{proof}
(i) Given $G \in \mathcal{P}$. Then, $G=K_{(n_0,\dots, n_k)}$. Take a vertex $x$ in $G$. It must
be located in one $K{(n_j)}$ of the zero-dimensional additive factors $G=K_{(n_0)}, \dots, G=K_{(n_k)}$. 
But then $S(x) = K_{(n_0,\dots,n_{j-1},n_{j+1},\dots, n_k)}$ which is in $\mathcal{P}$ and 
has dimension $1$ less. We have verified that $G \in \mathcal{Q}$. \\
(ii) Before showing the reverse, note that if $x,y$ are not connected in $G \in \mathcal{P}$
then $S(x) = S(y)$, implying that the diameter of $G$ is $\leq 2$ and that $G = S(x) \oplus K_{(n)}$ 
for some $n$ for every $x$. \\
(iii) Now assume $G \in \mathcal{Q}$. We show by induction with respect to dimension $d$ that it 
must be in $\mathcal{P}$. If $d=0$, then $G$ must have $(-1)$-dimensional unit spheres $S(x)$ 
meaning that every point in $G$ is isolated. Therefore $G=K_{(n_0)} = \overline{K_{n_0}}$. 
Now, assuming we have shown it to dimension $d$ and let $G$ be in $\mathcal{Q}$ of dimension
$d+1$. Then by definition of $\mathcal{Q}$, every unit sphere in $G$ has dimension $d$ and 
is in $\mathcal{Q}$. By induction assumption, $S(x)$ must be of the form $K_{(n_0,\dots, n_d)}$. 
The unit ball $B(x)$ is obtained by attaching all vertices $y \in S(x)$ to $x$ which means
$B(x) = S(x) \oplus 1 = S(x) \oplus K_{(1)}$. Therefore $B(x) =K_{(n_0,\dots, n_d,1)}$.
Take $z \in G \setminus B(x)$. It can not be connected to $x$ (as it would be in $S(x)$). 
But since $S(x)=S(y)$ shown in (ii), we have $B(y)=B(x)$.
So, $B(y) \cup B(x) = K_{(n_0,\dots, n_d,1)}$. Continue like this until no element has is any more 
available. Then $G = K_{(n_0,\dots, n_d,n)}$ for some $n$, showing that $G \in \mathcal{P}$. 
\end{proof} 

\paragraph{}
The class $\mathcal{P}$ of partition graphs has a nice recursive definition.
The graphs $\mathcal{P}$ are also characterized number theoretically in that all 
{\bf additive primes} in the monoid are zero-dimensional. In the dual Shannon picture, $\hat{\mathcal{P}}$ 
consists of disjoint union of complete graphs. If one of the additive factors is $1$, it is {\bf contractible},
meaning that there exists a vertex (actually one can take any vertex), such that $G \setminus \{x\}$
is contractible. The cohomology in general however is not trivial. For the utility graph $K_{(3,3)}$ for 
example, the Betti vector is $(1,4)$. In general, the Betti vector of $G \in \mathcal{P}$ 
is $(1,0, \dots, \prod_{j=0}^{k} (n_j-1) )$ for $k>0$ and $n_0$ for $k=0$. In particular, for
cross polytopes $p=(2,2,\dots, 2)$, it is $(1,0,\dots, 1)$. 
The {\bf inductive dimension} of $G \in \mathcal{P}$ is defined as the average of the dimensions 
of the unit spheres is equal the maximal dimension. 
They are small graphs of diameter $1$ or $2$ and have the property that given 
any finite set of points, then the intersection of all these unit spheres is again 
in the class. We should think of elements in $\mathcal{P}$ of dimension $k$ 
therefore as generalized $k$-dimensional points. Complete graphs $K_{k+1}=K_{(1,\dots,1)}$ 
are just a special case. The intersection of $\mathcal{P}$ and spheres $\mathcal{S}$ is
the set of cross polytopes, where every additive prime factor is a zero-dimensional sphere
$K_{(2)} = \overline{K_2}$. In short, elements in $\mathcal{P}$ can serve as $k$-dimensional
points. 

\section{Code}

\paragraph{}
Some Mathematica code appeared already in \cite{DiscreteAlgebraicSets}, where we considered
the case of maps $f: G \to \mathbb{R}^k$. The abstract version is much more elegant
because we do not have to program each dimension separately. It can serve also as 
pseudo code.  

\begin{tiny} \lstset{language=Mathematica} \lstset{frameround=fttt}
\begin{lstlisting}[frame=single]
Generate[A_]:=If[A=={},{},Sort[Delete[Union[Sort[Flatten[Map[Subsets,A],1]]],1]]];
Whitney[s_]:=Generate[FindClique[s,Infinity,All]];L=Length; Ver[X_]:=Union[Flatten[X]];
RFunction[G_,P_]:=Module[{},R[x_]:=x->RandomChoice[Range[L[Ver[P]]]];Map[R,Ver[G]]];
Facets[G_]:=Select[G,(L[#]==Max[Map[L,G]]) &];  Poly[X_]:=PolyhedronData[X,"Skeleton"];
AbstractSurface[G_,f_,A_]:=Select[G,(Sum[If[SubsetQ[#/.f,A[[l]]],1,0],{l,L[A]}]>0)&];
Z[n_]:=Partition[Range[n],1]; ZeroJoin[a_]:=If[L[a]==1,Z[a[[1]]],Whitney[CompleteGraph[a]]];
CJoin[G_,H_]:=Union[G,H+Max[G]+1,Map[Flatten,Map[Union,Flatten[Tuples[{G,H+Max[G]+1}],0]]]];
ToGraph[G_]:=UndirectedGraph[n=L[G];Graph[Range[n],
  Select[Flatten[Table[k->l,{k,n},{l,k+1,n}],1],(SubsetQ[G[[#[[2]]]],G[[#[[1]]]]])&]]];
J=Whitney[Poly["Icosahedron"]]; G=CJoin[J,Whitney[CycleGraph[13]]]; p={2,2,3};P=ZeroJoin[p];
V=Ver[P]; F=Facets[P]; H=AbstractSurface[G,RFunction[G,V],F];   GraphPlot3D[ToGraph[H]]
\end{lstlisting}
\end{tiny}

\begin{figure}[!htpb]
\scalebox{0.15}{\includegraphics{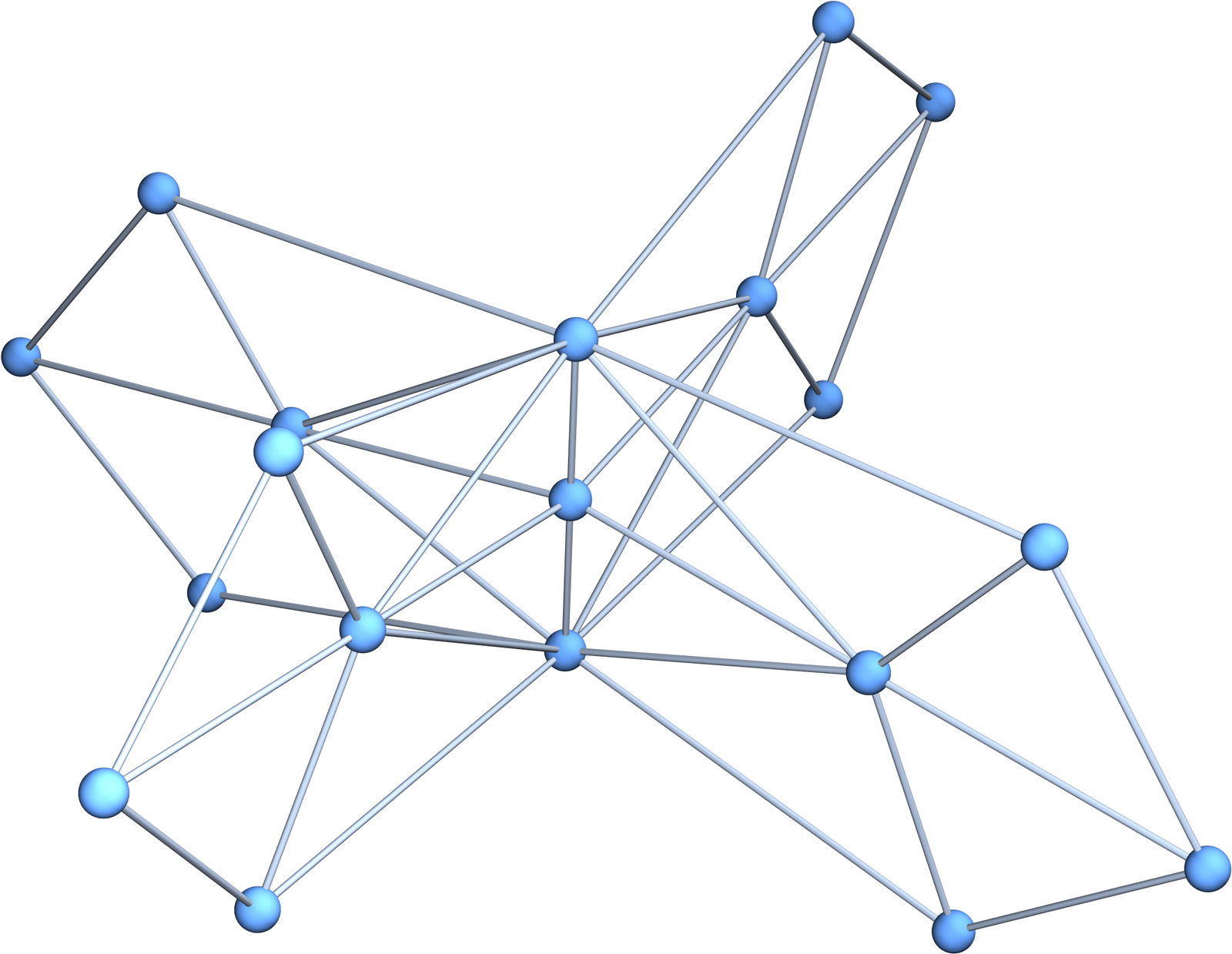}}
\scalebox{0.15}{\includegraphics{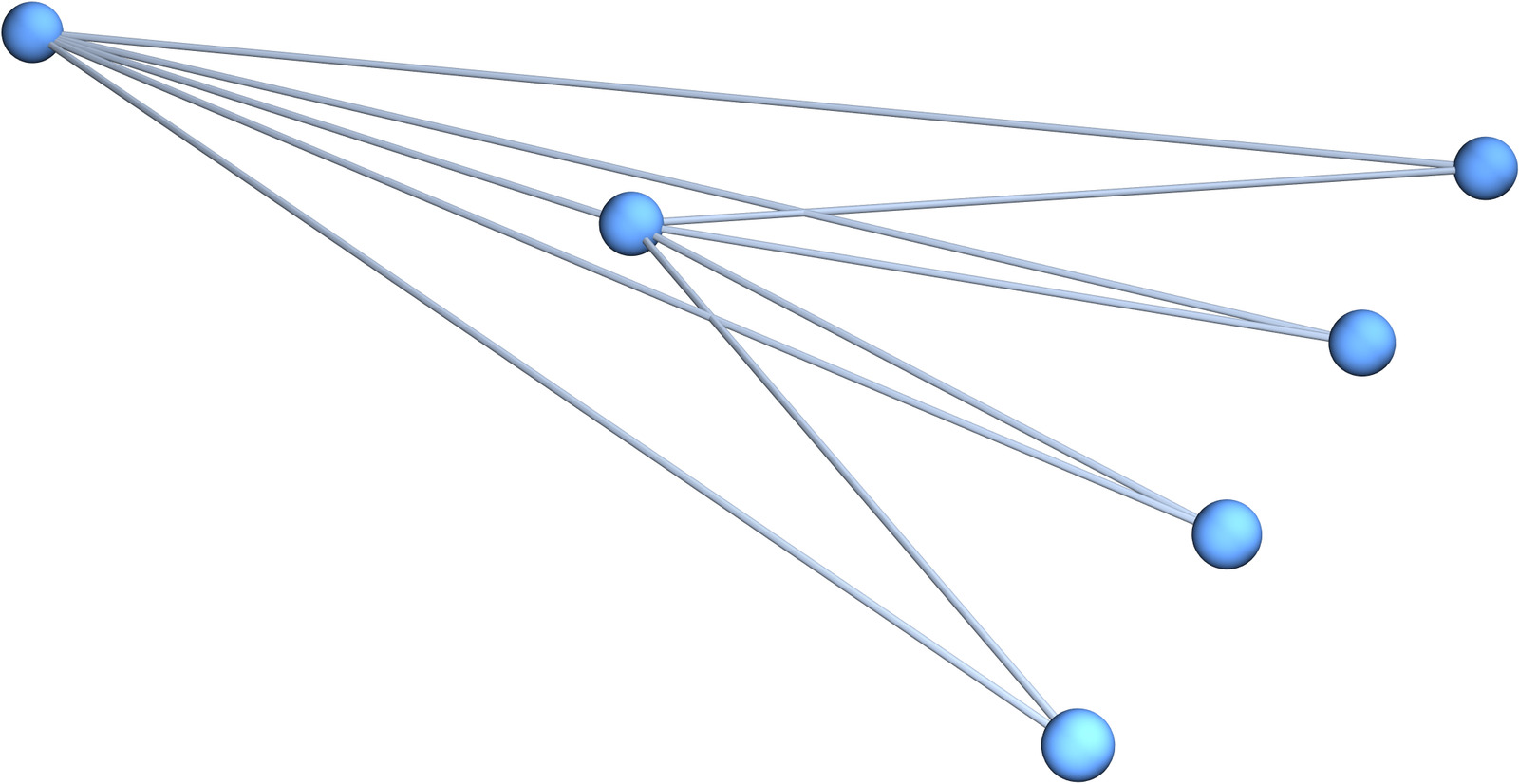}}
\caption{
The graph of the partition complex $K_{(1,1,4)}$ 
was generated with $ToGraph[ZeroJoin[\{1, 1, 4\}]]$
It is the Barycentric refinement of the 
$CompleteGraph[\{1,1,4\}]$. The procedure
$ZeroJoin[\{1,1,4\}]$ produces the simplicial complex
$\{\{1\},\{2\},\{3\},\{4\},\{5\},\{6\}$,
$\{1,2\},\{1,3\},\{1,4\},\{1,5\},\{1,6\},\{2,3\},\{2,4\},\{2,5\},\{2,6\}$,
$\{1,2,3\},\{1,2,4 \},\{1,2,5\},\{1,2,6\}\}$. 
}
\end{figure}

\paragraph{}
By the way, the addition in the monoid of partitions is already called {\it Join}
$Join[\{1,2,3\},\{1,6,6\}]$ gives $\{1,2,3,1,6,6\}$. This fits, as the
graph $K_{(1,2,3,1,6,6)}$ is the Zykov join of $K_{(1,2,3)}$ and $K_{(1,6,6)}$. 
The Zykov join is in the Wolfram language known as {\it GraphJoin}.

\begin{tiny} \lstset{language=Mathematica} \lstset{frameround=fttt}
\begin{lstlisting}[frame=single]
U=CompleteGraph[{1,1,2,3,6,6}];
V=GraphJoin[CompleteGraph[{1,2,3}],CompleteGraph[{1,6,6}]];
W=CompleteGraph[Join[{1,2,3},{1,6,6}]]; 
IsomorphicGraphQ[U,V,W] 
\end{lstlisting}
\end{tiny}

\section{Statistics}

\paragraph{}
The following code generates all surjective maps and analyzes in each 
case the topology.  This works only for very small host manifolds. To see the topological nature
of the manifolds, we look at the cohomology in the form of {\bf Betti vectors} 
$(b_0,b_1, \dots b_d)$ which gives the dimensions $b_k$ of the {\bf harmonic $k$-forms} of 
the complex. This in particular gives the number of connectivity components $b_0$ 
or the {\bf genus} $b_1/2$, which is for surfaces the ``number of holes".

\begin{tiny} \lstset{language=Mathematica} \lstset{frameround=fttt}
\begin{lstlisting}[frame=single]
Generate[A_]:=If[A=={},{},Sort[Delete[Union[Sort[Flatten[Map[Subsets,A],1]]],1]]];
Whitney[s_]:=Generate[FindClique[s,Infinity,All]];L=Length; Ver[X_]:=Union[Flatten[X]];
sig[x_]:=Signature[x]; nu[A_]:=If[A=={},0,L[NullSpace[A]]];
F[G_]:=Module[{l=Map[L,G]},If[G=={},{},Table[Sum[If[l[[j]]==k,1,0],{j,L[l]}],{k,Max[l]}]]];
sig[x_,y_]:=If[SubsetQ[x,y]&&(L[x]==L[y]+1),sig[Prepend[y,Complement[x,y][[1]]]]*sig[x],0];
Dirac[G_]:=Module[{f=F[G],b,d,n=L[G]},b=Prepend[Table[Sum[f[[l]],{l,k}],{k,L[f]}],0];
 d=Table[sig[G[[i]],G[[j]]],{i,n},{j,n}]; {d+Transpose[d],b}];
Beltrami[G_]:= Module[{B=Dirac[G][[1]]},B.B];
Hodge[G_]:=Module[{Q,b,H},{Q,b}=Dirac[G];H=Q.Q;Table[Table[H[[b[[k]]+i,b[[k]]+j]],
 {i,b[[k+1]]-b[[k]]},{j,b[[k+1]]-b[[k]]}],{k,L[b]-1}]];
Betti[s_]:=Module[{G},If[GraphQ[s],G=Whitney[s],G=s];Map[nu,Hodge[G]]];
Facets[G_]:=Select[G,(L[#]==Max[Map[L,G]]) &];  Poly[X_]:=PolyhedronData[X,"Skeleton"];
AbstractSurface[G_,f_,A_]:=Select[G,(Sum[If[SubsetQ[#/.f,A[[l]]],1,0],{l,L[A]}]>0)&];
Z[n_]:=Partition[Range[n],1]; ZeroJoin[a_]:=If[L[a]==1,Z[a[[1]]],Whitney[CompleteGraph[a]]];
CJoin[G_,H_]:=Union[G,H+Max[G]+1,Map[Flatten,Map[Union,Flatten[Tuples[{G,H+Max[G]+1}],0]]]];
ComputeAllBetti[G_,P_]:=Module[{V=Ver[P],Fa=Facets[P],n=Length[Ver[G]]}, 
  A=Tuples[V,n]; Surjectiv=Select[A,(L[Union[#]]==L[V]) &];  Print[Length[Surjectiv]]; 
  Phi[g_]:=Module[{f=Table[k->g[[k]],{k,n}]},H=AbstractSurface[G,f,Fa];Length[H]]; 
  AllSurfaces=Map[Phi,Surjectiv]; B=Flatten[Position[AllSurfaces,0]];
  EmptyCases=Table[Surjectiv[[B[[k]]]],{k,Length[B]}];
  NonEmptyCases=Complement[Surjectiv,EmptyCases]; 
  Psi[g_]:=Module[{f=Table[k->g[[k]],{k,n}]},H=AbstractSurface[G,f,Fa];Betti[H]];
  AllBetti=Map[Psi,NonEmptyCases]];  

S3 = ZeroJoin[{2,2,2,2}]; S4 = ZeroJoin[{2,2,2,2,2}]; S5 = ZeroJoin[{2,2,2,2,2,2}]; 
RP3=Generate[{{1,2,3,4},{1,2,3,5},{1,2,4,6},{1,2,5,6},{1,3,4,7},{1,3,5,7},{1,4,6,7},
{1,5,6,7},{2,3,4,8},{2,3,5,9},{2,3,8,9},{2,4,6,10},{2,4,8,10},{2,5,6,11},{2,5,9,11},
{2,6,10,11},{2,7,8,9},{2,7,8,10},{2,7,9,11},{2,7,10,11},{3,4,7,11},{3,4,8,11},
{3,5,7,10},{3,5,9,10},{3,6,8,9}, {3,6,8,11},{3,6,9,10},{3,6,10,11},{3,7,10,11},
{4,5,8,10},{4,5,8,11},{4,5,9,10},{4,5,9,11},{4,6,7,9},{4,6,9,10},{4,7,9,11},{5,6,7,8},
{5,6,8,11},{5,7,8,10},{6,7,8,9}}];
CP2=Generate[{{1,2,3,4,5},{1,2,3,4,7},{1,2,3,5,8},{1,2,3,7,8},{1,2,4,5,6},
  {1,2,4,6,7},{1,2,5,6,8},{1,2,6,7,9},{1,2,6,8,9},{1,2,7,8,9},{1,3,4,5,9},{1,3,4,7,8},
  {1,3,4,8,9},{1,3,5,6,8},{1,3,5,6,9},{1,3,6,8,9},{1,4,5,6,7},{1,4,5,7,9},{1,4,7,8,9},
  {1,5,6,7,9},{2,3,4,5,9},{2,3,4,6,7},{2,3,4,6,9},{2,3,5,7,8},{2,3,5,7,9},{2,3,6,7,9},
  {2,4,5,6,8},{2,4,5,8,9},{2,4,6,8,9},{2,5,7,8,9},{3,4,6,7,8},{3,4,6,8,9},{3,5,6,7,8},
  {3,5,6,7,9},{4,5,6,7,8},{4,5,7,8,9}}]; 
S2xS2=Generate[{{1,2,3,4,6},{1,2,3,4,7},{1,2,3,6,9},{1,2,3,7,9},{1,2,4,5,8},{1,2,4,5,9},
  {1,2,4,6,8},{1,2,4,7,9},{1,2,5,6,8},{1,2,5,6,9},{1,3,4,6,7},{1,3,5,6,7},{1,3,5,6,9},
  {1,3,5,7,10},{1,3,5,9,11},{1,3,5,10,11},{1,3,7,9,10},{1,3,9,10,11},{1,4,5,8,10},
  {1,4,5,9,11},{1,4,5,10,11},{1,4,6,7,11},{1,4,6,8,10},{1,4,6,10,11},{1,4,7,9,11},
  {1,5,6,7,8},{1,5,7,8,10},{1,6,7,8,11},{1,6,8,10,11},{1,7,8,10,11},{1,7,9,10,11},
  {2,3,4,6,8},{2,3,4,7,8},{2,3,5,7,10},{2,3,5,7,11},{2,3,5,10,11},{2,3,6,8,10},{2,3,6,9,10},
  {2,3,7,8,11},{2,3,7,9,10},{2,3,8,10,11},{2,4,5,8,9},{2,4,7,8,9},{2,5,6,8,11},{2,5,6,9,10},
  {2,5,6,10,11},{2,5,7,8,9},{2,5,7,8,11},{2,5,7,9,10},{2,6,8,10,11},{3,4,6,7,11},{3,4,6,8,10},
  {3,4,6,9,10},{3,4,6,9,11},{3,4,7,8,11},{3,4,8,9,10},{3,4,8,9,11},{3,5,6,7,11},{3,5,6,9,11},
  {3,8,9,10,11},{4,5,6,9,10},{4,5,6,9,11},{4,5,6,10,11},{4,5,8,9,10},{4,7,8,9,11},{5,6,7,8,11},
  {5,7,8,9,10},{7,8,9,10,11}}]; 
P=ZeroJoin[{1,1,1}];A1=ComputeAllBetti[S3,P];   Save["allbetti_2222_111.txt", A1]; 
P=ZeroJoin[{1,1,1}];A2=ComputeAllBetti[S4,P];   Save["allbetti_22222_111.txt",A2]; 
P=ZeroJoin[{1,1}];  A3=ComputeAllBetti[S4,P];   Save["allbetti_22222_11.txt", A3]; 
P=ZeroJoin[{1,1}];  A4=ComputeAllBetti[RP3,P];  Save["allbetti_rp3_11.txt",   A4];
P=ZeroJoin[{1,1,1}];A5=ComputeAllBetti[CP2,P];  Save["allbetti_cp2_111.txt",  A5];
P=ZeroJoin[{1,1}];  A6=ComputeAllBetti[CP2,P];  Save["allbetti_cp2_11.txt",   A6];
P=ZeroJoin[{1,1}];  A7=ComputeAllBetti[S2xS2,P];Save["allbetti_s2xs2_11.txt", A7];
P=ZeroJoin[{1,1,1}];A8=ComputeAllBetti[S2xS2,P];Save["allbetti_s2xs2_111.txt",A8];
P=ZeroJoin[{1,2}];  A9=ComputeAllBetti[S3,P];   Save["allbetti_2222_12.txt",  A9]; 
P=ZeroJoin[{1,3}];  A10=ComputeAllBetti[S3,P];  Save["allbetti_2222_13.txt", A10]; 
\end{lstlisting}
\end{tiny}

\paragraph{}
{\bf Example 1}: In the list of all co-dimension 2 curves in the smallest 3 sphere $K_{(2,2,2,2)}$. 
There were 4848 single circles,888 cases with 2 circles and 36 cases with 4 circles.
There were 5772 surjective maps from $G$ to $K_3$ which produced a non-empty surfaces.
For only 24 cases, the resulting manifold is empty.

\paragraph{}
{\bf Example 2}: Take the smallest 4 sphere $K_{(2,2,2,2,2)}$. 
There were 55980 surjective maps from $G$ to $K_3$. This produces co-dimension 2 surfaces. 
Only 30 surjective function sproduced an empty 
surface. The other 55950 produced either 2-spheres (50160), 2 disjoint 2-spheres (3960) or 4 disjoint 
2-spheres (60 cases) or single $2$-tori (1680 cases) or 2 disjoint $2$-tori (90 cases). 

\paragraph{}
{\bf Example 3}: For all co-dimension $1$ surfaces in the 4-sphere $G=K_{(2,2,2,2,2)}$, we see
$992$ 3-spheres, $20$ cylinders $S^2 \times S^1$ and $10$ pairs of disjoint sphere. 
There were $1022$ surjective maps from $G$ to $K_2$, so that 30 produced empty sets.

\paragraph{}
{\bf Example 4}: All co-dimension 1 surfaces in a small projective 3-manifold $\mathbb{RP}^3$
produce $2046=2^{12}-2$ possible surjective maps to $K_2$. All of them give surfaces. 
There are $548$ spheres, $1394$ tori, $68$ genus 2 surfaces, $8$ genus 3 surfaces, $20$ pairs of spheres,
$8$ pairs with one sphere and one torus and 2 pairs, where one is a $2$-torus and one a genus 2 surface.

\paragraph{}
{\bf Example 5}: When looking at co-dimension 2 surfaces in a small implementation of 
the 4-manifold $\mathbb{CP}^2$, we count $8748$ 2-spheres and $9402$ tori. 

\paragraph{}
{\bf Example 6}: Among all co-dimension 1 surfaces in a small implementation of
$\mathbb{CP}^2$ we count $511$ 3-spheres. 

\paragraph{}
{\bf Example 7} For all hyper-surfaces defined by $K_2$ in the small 4-manifold $\mathbb{S}^2 \times \mathbb{S}^2$
defined by surjective to $K_2$ the nuumber of $3$-manifolds is 2046.
There were 1446 3-spheres with Betti vector $(1,0,0,1)$ and 600 generalized tori $S^2 \times S^1$
with Betti vector $(1,1,1,1)$.

\paragraph{}
{\bf Example 8} looks for co-dimension 2 surfaces in a small $G=S^2 \times S^2$ using the triangle 
$P=K_3$. There are $42288$ two-spheres, $71022$  genus 1 surfaces ($2$-tori),
$48210$ genus 2 surfaces, $7788$ genus 3 surfaces,
$660$ genus 4 surfaces, $78$ genus 5 surfaces and 960 disconnected pairs of 2-spheres. 
In total, the cohomology of $171006$ surfaces was computed. 

\paragraph{}
{\bf Example 9}  For all hyper-surfaces in the small 3 sphere $K_{(2,2,2,2)}$ with $P=K_{(1,2)}$ 
we have $5496$ surfaces, where $5456$ were 2-spheres, $84$ were $2$-tori and $256$ were double spheres. 

\paragraph{}
{\bf Example 10}  For all hyper-surfaces in the small 3 sphere $G=K_{(2,2,2,2)}$ with $P=K_{(1,3)}$ we have $40824$ 
cases in total, no empty surfaces, $38448$ spheres, $216$ tori and $2160$ double spheres. 
In this case we look at all maps from $G$ to $\{1,2,3,4\}$ because $P$ has $4$ vertices. This requires to look at 
much more maps. An interesting question is how large the total number of hyper-surfaces $\bigcup_P \bigcup_f H(G,f,P)$ 
can become. As they are all sub-graphs of the Barycentric refinement $G_1$ of $G$, there are only finitely many. 
Many $H(G,f,P)$ are are actually the same graph. 

\paragraph{}
As the gallery in the next section illustrates, much more interesting sub-manifolds
are possible for larger host manifolds $G$.
But the number of possible maps is too large for an exhaustive list. 
We would have to use Monte-Carlo simulations to investigate what manifolds typically occur. 

\pagebreak 

\section{Gallery}

\begin{figure}[!htpb]
\scalebox{0.075}{\includegraphics{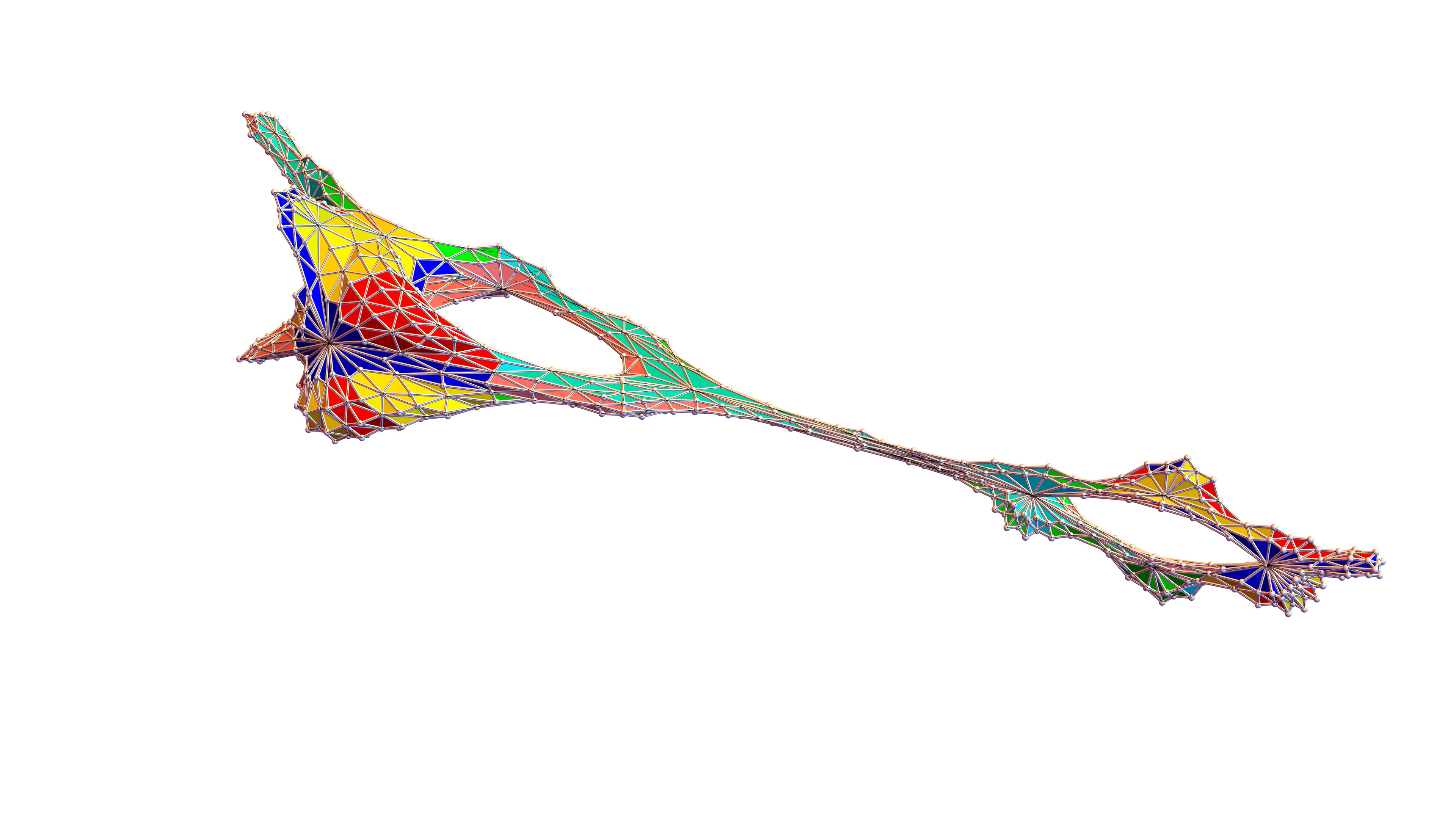}}
\scalebox{0.075}{\includegraphics{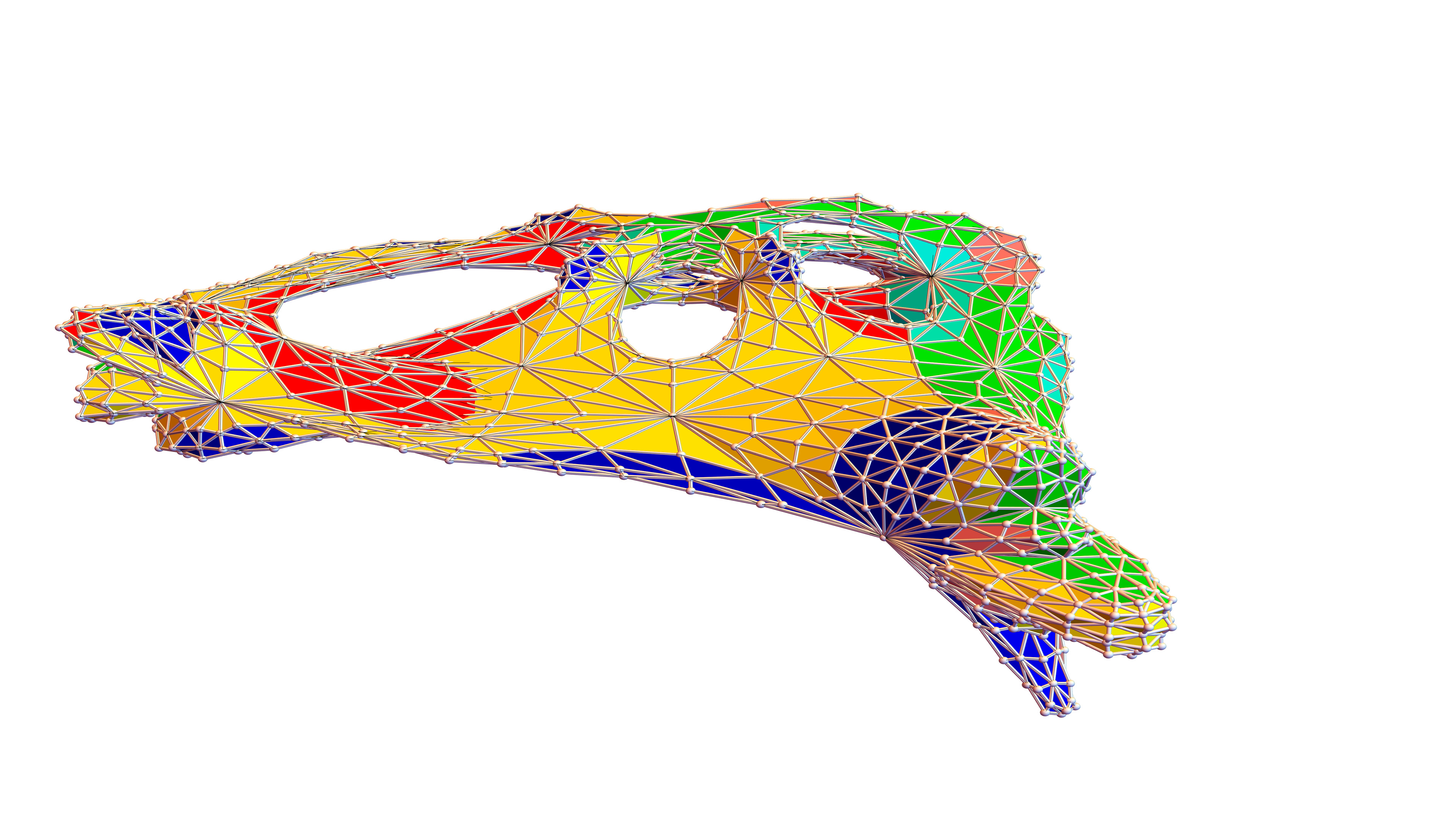}}
\scalebox{0.075}{\includegraphics{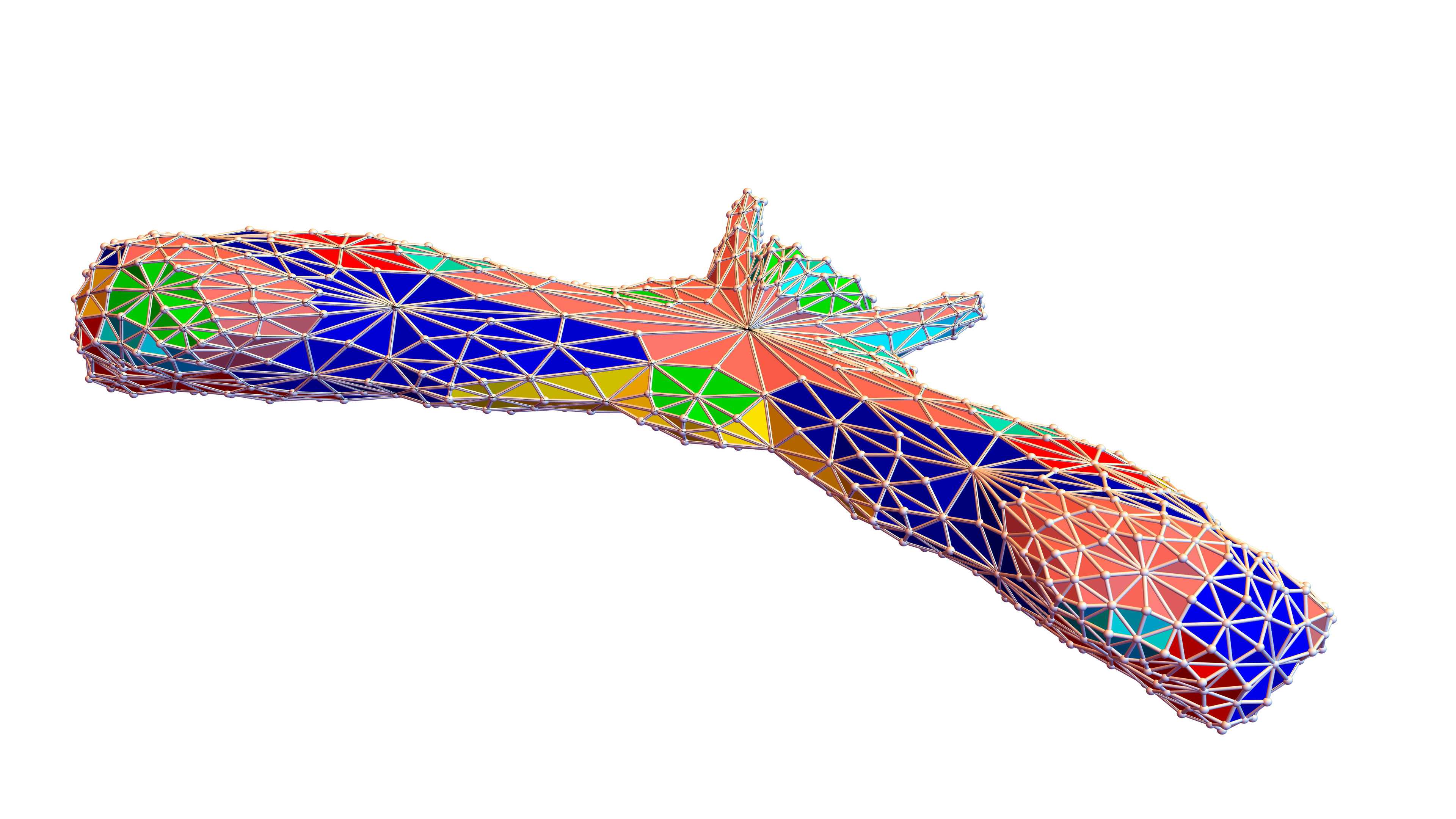}}
\scalebox{0.075}{\includegraphics{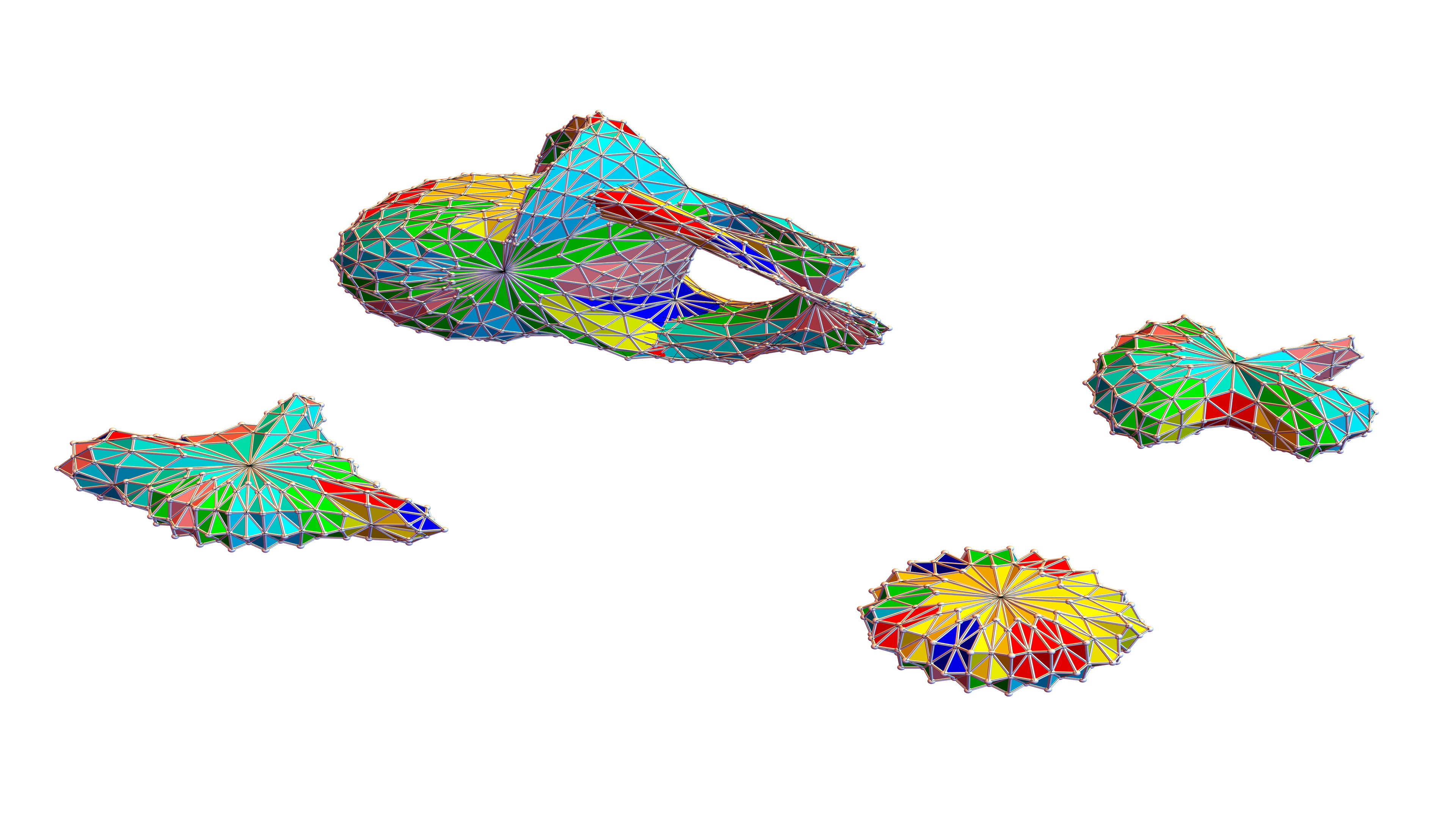}}
\scalebox{0.075}{\includegraphics{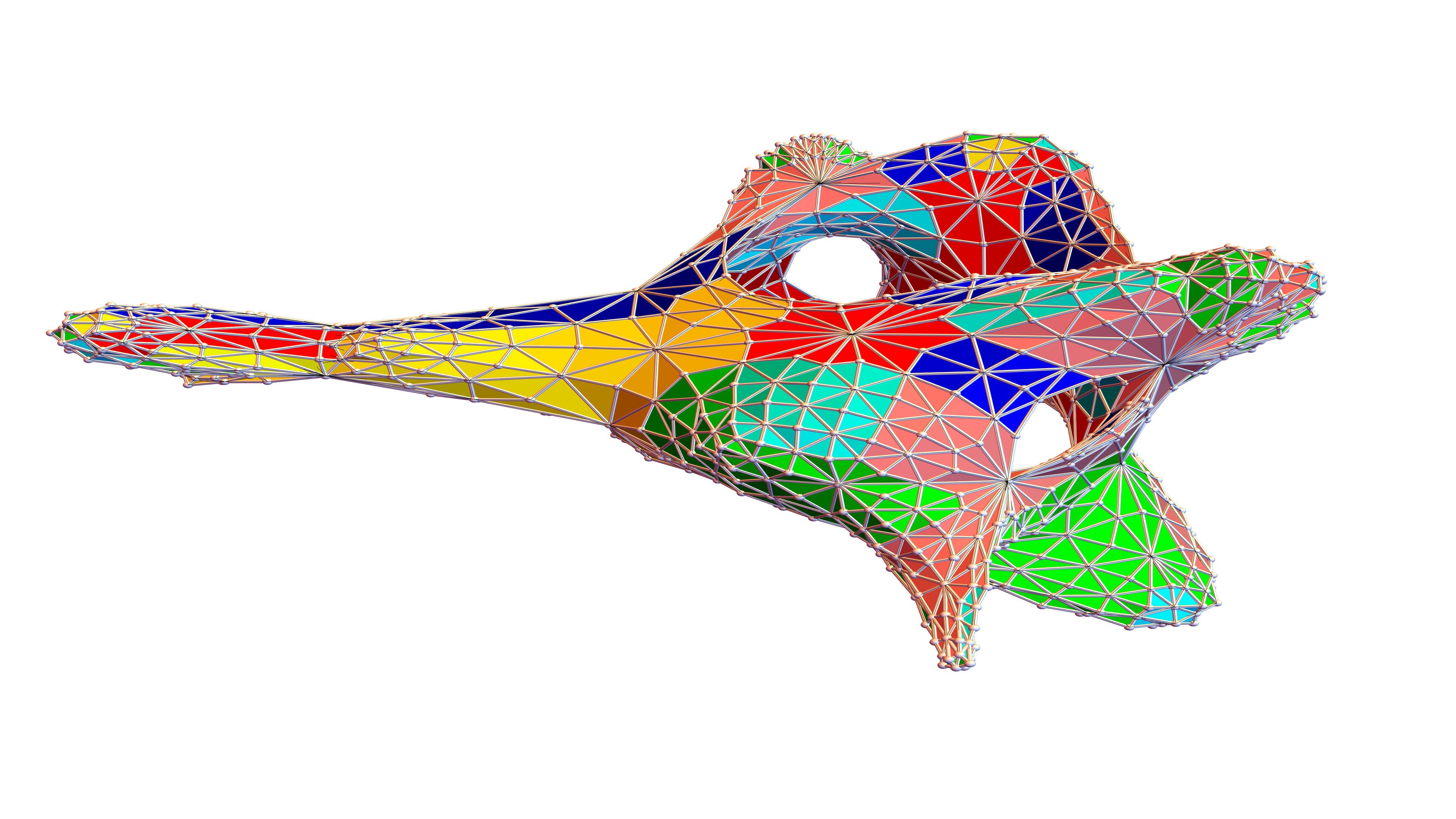}}
\scalebox{0.075}{\includegraphics{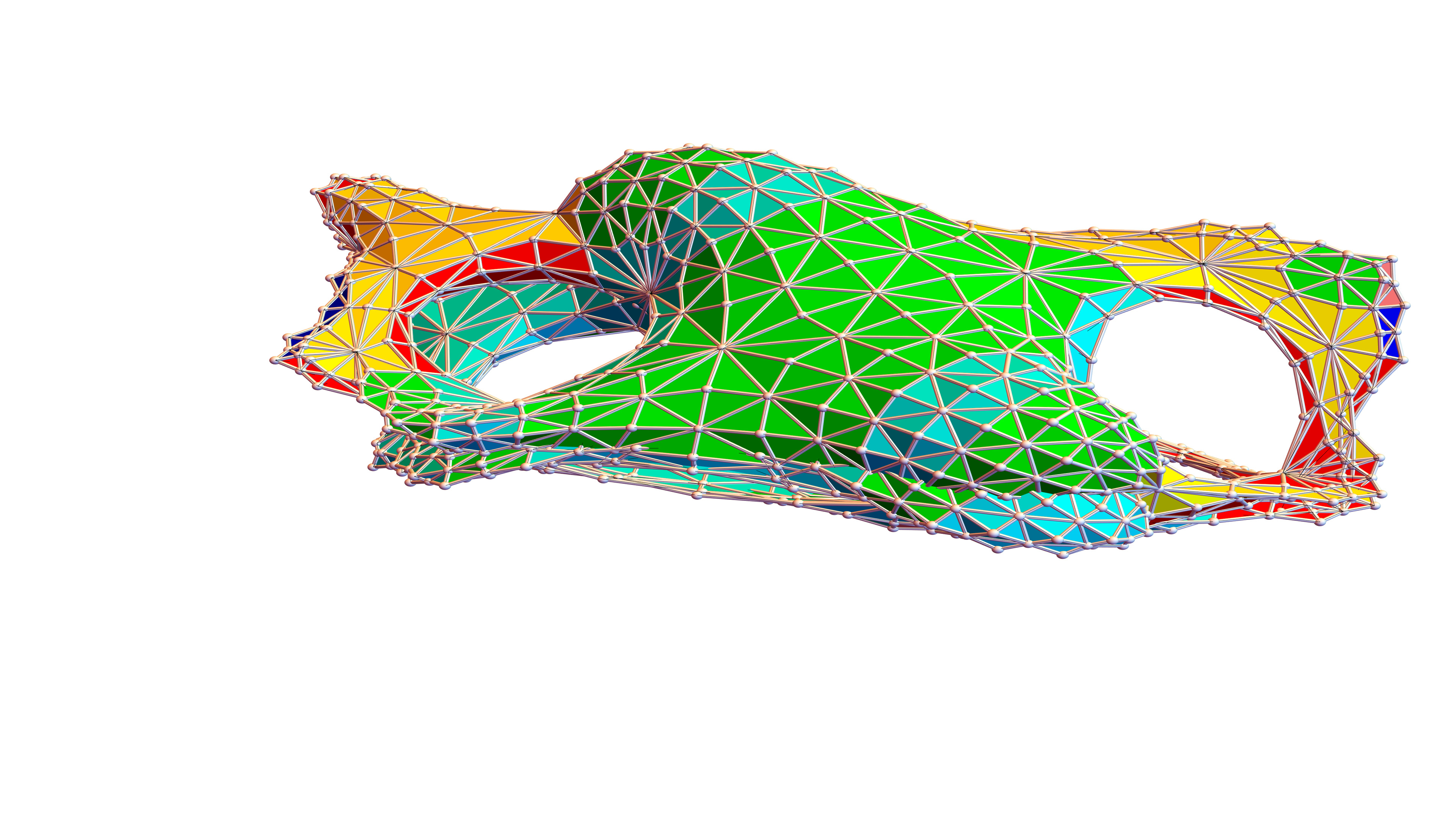}}
\scalebox{0.075}{\includegraphics{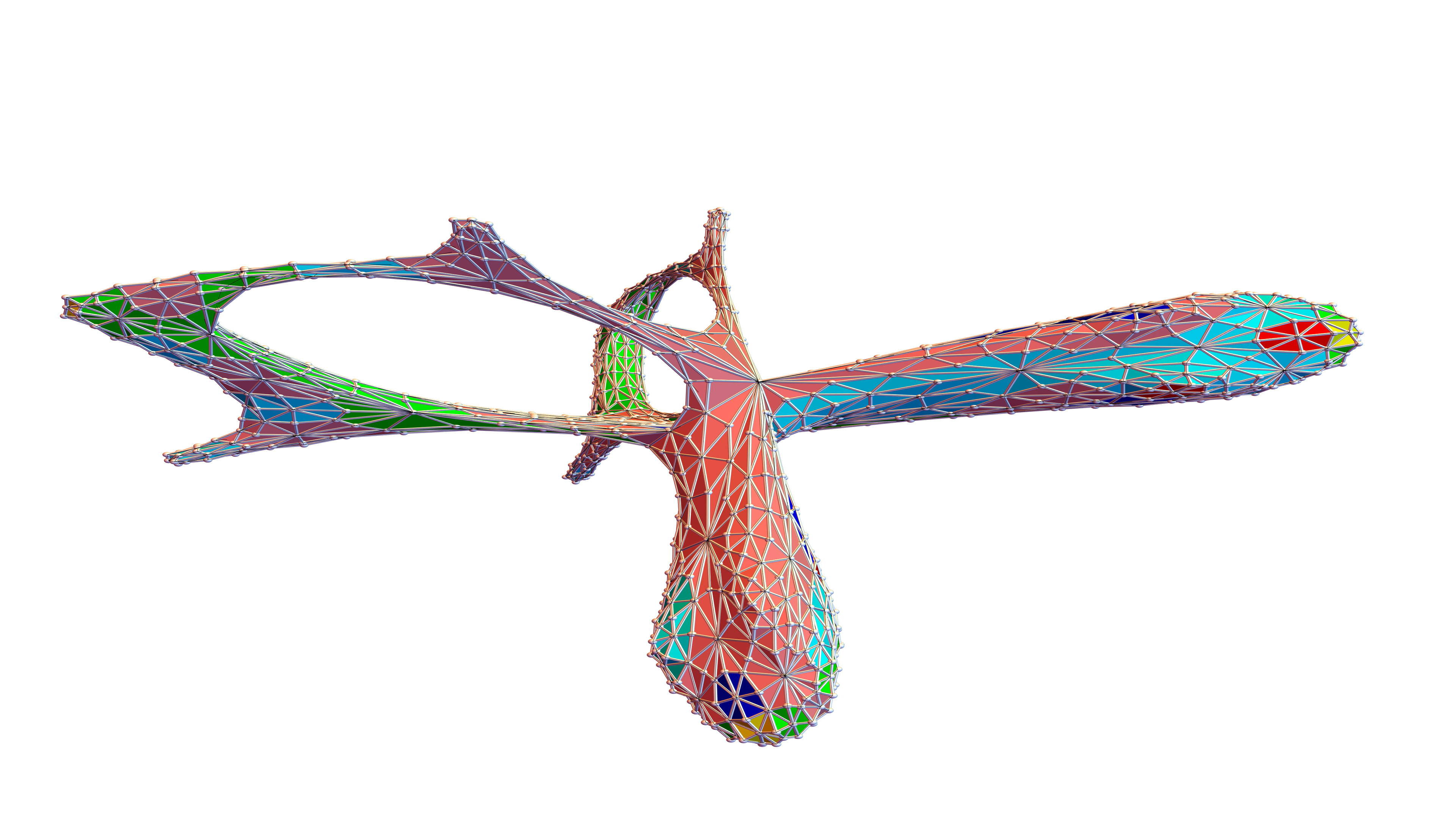}}
\scalebox{0.075}{\includegraphics{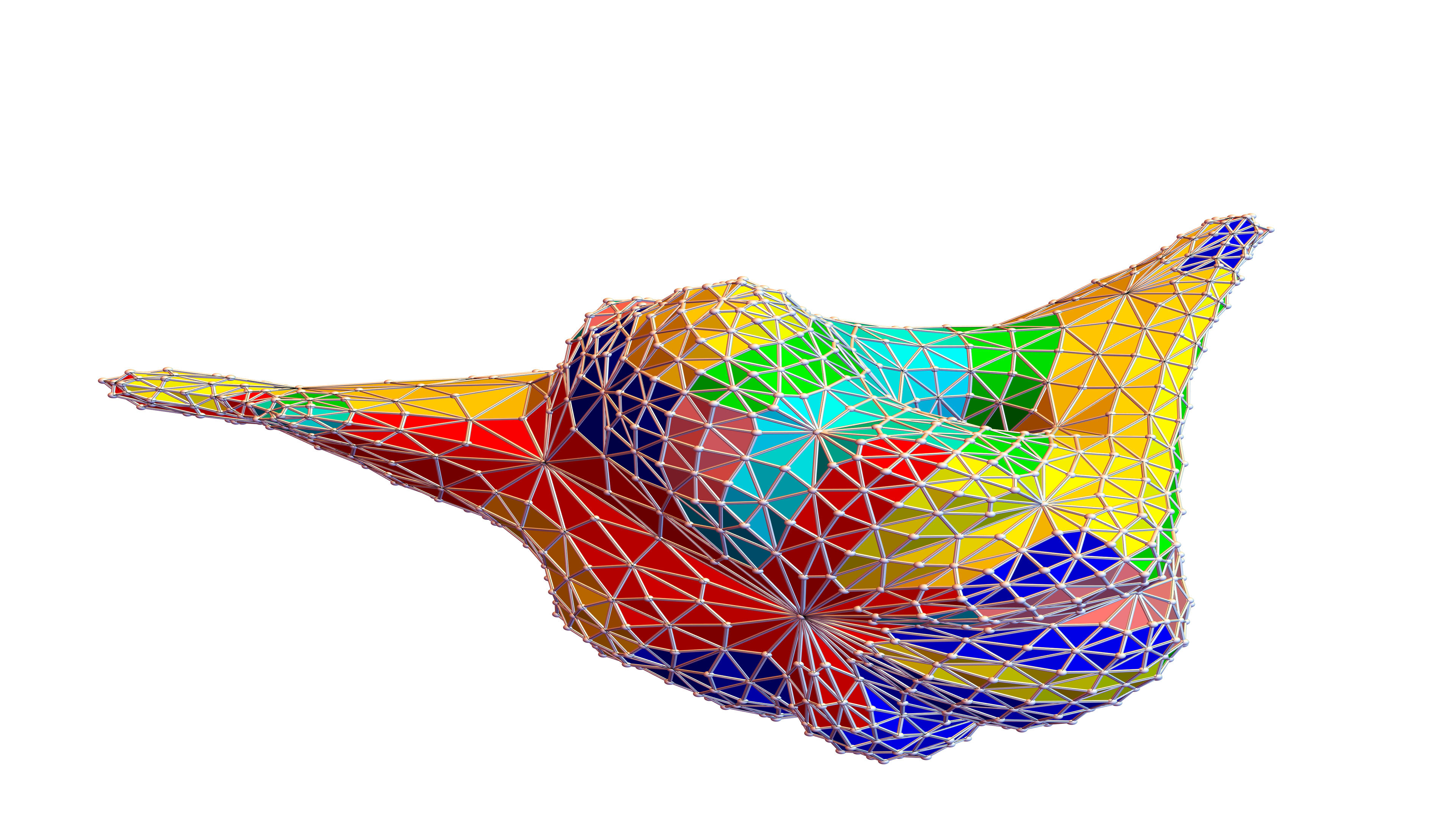}}
\caption{
The host $G$ is a 4-sphere defined as the join of $C_{15}$ and the Barycentric 
refinement ${\rm Oct}_1$ of the Octahedron $K_{\{2,2,2\}}={\rm Oct}$. The 4-manifold 
$G$ has $26+15 = 41$ vertices because ${\rm Oct}_1$ has $6+8+12=26$ vertices.
The partition complex $P$ is defined by $p=(1,2,3)$. There are already
$\sum_{j=0}^6 \Binomial{41}{6-j} j^{41}$ 
$=7834446798231431881744478798427378344467982314318817444787984273$ 
surjective functions from $\{1,\dots,24\}$ onto $\{1,\dots,6\}$.
The number of functions from $G$ onto $P$ (as defined in the first page) is
even larger, as a function must only cover only at least one of the 6 triangles
in $P$. Each of these $f$'s produces a 2-manifolds $H(G,f,P)$,
colored depending which of the triangles in $P$ is reached. $H(G,f,P)$ is
so partitioned into sub-manifolds with boundary. 
}
\end{figure}

\pagebreak 

\bibliographystyle{plain}

\end{document}